\definecolor{webgreen}{rgb}{0,.5,0}
\definecolor{webbrown}{rgb}{.6,0,0} 
\definecolor{RoyalBlue}{cmyk}{1, 0.50, 0, 0}
\newcommand{\R}{{\mathbb R}}
\newcommand{\D}{{\mathbb D}} 
\newcommand{\ic}{\mathrm{i}}
\newcommand{\dd}{\mathrm{d}}
\newtheorem{theorem}{Theorem}
\newtheorem{lemma}{Lemma}
\begin{document}

\title[Expected number of real zeros of Kac-Geronimus polynomials]{An asymptotic expansion for the expected number of real zeros of Kac-Geronimus polynomials}

\author{Hanan Aljubran}

\author{Maxim L. Yattselev}

\address{Department of Mathematical Sciences, Indiana University-Purdue University Indianapolis, 402~North Blackford Street, Indianapolis, IN 46202}

\email{\href{mailto:haljubra@iu.edu}{haljubra@iu.edu} (Hanan Aljubran)}

\email{\href{mailto:maxyatts@iupui.edu}{maxyatts@iupui.edu} (Maxim Yattselev)}

\thanks{The work of the first author was done towards completion of her Ph.D. degree at Indiana University-Purdue University Indianapolis under the direction of the second author. The research of the second author was supported in part by grants from the Simons Foundation, CGM-354538 and CGM-706591. Both authors are grateful for hospitality of American Institute of Mathematics where a part of this project was carried out during workshop ``Zeros of random polynomials''.}

\subjclass[2010]{30C15, 26C10, 30B20.}

\keywords{Random polynomials, expected number of real zeros, asymptotic expansion, Geronimus polynomials}

\begin{abstract}
Let \( \{\varphi_i(z;\alpha)\}_{i=0}^\infty \), corresponding to \( \alpha\in(-1,1) \), be orthonormal Geronimus polynomials.  We study asymptotic behavior of the expected number of real zeros, say \( \mathbb E_n(\alpha) \), of random polynomials
\[
P_n(z) := \sum_{i=0}^n\eta_i\varphi_i(z;\alpha),
\]
where \( \eta_0,\dots,\eta_n \) are i.i.d. standard Gaussian random variables.  When \( \alpha=0 \), \( \varphi_i(z;0)=z^i \) and \( P_n(z) \) are called Kac polynomials. In this case it was shown by Wilkins that \( \mathbb E_n(0) \) admits an asymptotic expansion of the form
\[
\mathbb E_n(0) \sim \frac2\pi\log(n+1) + \sum_{p=0}^\infty A_p(n+1)^{-p}
\]
(Kac himself obtained the leading term of this expansion). In this work we obtain a similar expansion of \( \mathbb E(\alpha) \) for \( \alpha\neq 0 \). As it turns out, the leading term of the asymptotics in this case is \( (1/\pi)\log(n+1) \).
\end{abstract}

\maketitle

\section{Introduction and Main Results}

Random polynomials is a relatively old subject with initial contributions by Bloch and P\'olya, Littlewood and Offord, Erd\"os and Offord, Arnold, Kac, and many other authors. An interested reader can find a well referenced early history of the subject in the books by Bharucha-Reid and Sambandham \cite{Bharucha-ReidSambandham}, and by Farahmand \cite{Farahmand}. In \cite{Kac43}, Kac considered random polynomials
\begin{equation}
\label{kac}
P_n(z) = \eta_0 + \eta_1 z + \cdots +\eta_nz^n,
\end{equation}
where \( \eta_i \) are i.i.d. standard real Gaussian random variables. He has shown that \( \mathbb E_n(\Omega) \), the expected number of zeros of \( P_n(z) \) on a measurable set \( \Omega\subset\R \), is equal to
\begin{equation}
\label{Kac}
\mathbb E_n(\Omega) = \frac1\pi\int_\Omega\frac{\sqrt{1-h_{n+1}^2(x)}}{|1-x^2|}\dd x, \quad h_{n+1}(x) = \frac{(n+1)x^n(1-x^2)}{1-x^{2n+2}},
\end{equation}
from which he proceeded with an asymptotic formula
\begin{equation}
\label{KacE}
\mathbb E_n(\R) = \frac{2+o(1)}\pi\log(n+1) \quad \text{as} \quad n\to\infty.
\end{equation}
It was shown by Wilkins \cite{Wilk88}, after some intermediate results cited in \cite{Wilk88}, that there exist constants \( A_p \), \( p\geq 0 \), such that \( \mathbb E_n(\R) \) has an asymptotic expansion of the form
\begin{equation}
\label{Wilkins}
\mathbb E_n(\R) \sim \frac2\pi\log (n+1) + \sum_{p=0}^\infty A_p(n+1)^{-p},
\end{equation}
where
\begin{equation}
\label{A0}
A_0 = \frac2\pi\left(\log2+\int_0^1\frac{f(t)}t\dd t + \int_1^\infty \frac{f(t)-1}t \dd t\right), \quad f(t) := \sqrt{1-\left(\frac{2t}{e^t-e^{-t}}\right)^2}.
\end{equation}

Many subsequent results on random polynomials are concerned with relaxing the conditions on random coefficients, see, for example, \cite{IbMas71,NgNgVu16,DNgVu18}, or the behavior of the counting measures of zeros of random polynomials as in \cite{MR1935565, MR2318650, MR3262481, MR3306686,  MR3489554, MR3571446, MR3739138, LubPritXie18, BlDau19, Dau}. Our primary interest lies in studying the expected number of real zeros when the basis is a family of orthogonal polynomials in the spirit of \cite{MR0268933,MR653589,MR1377012,LubPritXie16}. More precisely, Edelman and Kostlan \cite{EdKos95} considered random functions of the form
\begin{equation}
\label{Vanderbei}
P_n(z) = \eta_0f_0(z) + \eta_1 f_1(z) + \cdots +\eta_nf_n(z),
\end{equation}
where \( \eta_i \) are certain real random variables and \( f_m(z) \) are arbitrary functions on the complex plane that are real on the real line. Using a beautiful and simple geometrical argument they have shown\footnote{In fact, Edelman and Kostlan derive an expression for the real intensity function for any random vector \( (\eta_0,\ldots,\eta_n) \) in terms of its joint probability density function and of \( v(x) \).} that if \( \eta_0,\ldots,\eta_n \) are elements of a multivariate real normal distribution with mean zero and covariance matrix \( C \) and the functions \( f_m(z) \) are differentiable on the real line, then
\[
\mathbb E_n(\Omega) = \int_\Omega\rho_n(x)\dd x, \quad \rho_n(x) = \left.\frac1\pi\frac{\partial^2}{\partial s\partial t}\log\left( v(s)^\mathsf{T}Cv(t)\right)\right|_{t=s=x},
\]
where \( v(x) = \big(f_0(x),\ldots,f_n(x)\big)^\mathsf{T} \). If random variables \( \eta_i \) in \eqref{Vanderbei} are again i.i.d. standard real Gaussians, then the above expression for \( \rho_n(x) \) specializes to
\begin{equation}
\label{real-intensity}
\rho_n(x) = \frac1\pi\frac{\sqrt{K_{n+1}(x,x)K_{n+1}^{(1,1)}(x,x)-K_{n+1}^{(1,0)}(x,x)^2}}{K_{n+1}(x,x)}
\end{equation}
(this formula was also independently rederived in \cite[Proposition~1.1]{LubPritXie16} and \cite[Theorem~1.2]{uVan}), where \( K_{n+1}(x,y) := K_{n+1}^{(0,0)}(x,y) \) and
\[
K_{n+1}^{(l,k)}(x,y) := \sum_{i=0}^nf_i^{(l)}(x)\overline{f_i^{(k)}(y)}.
\]

We are interested in the case where the spanning functions in \eqref{Vanderbei} are taken to be orthonormal polynomials on the unit circle. Recall \cite[Theorem~1.5.2]{Simon1} that monic orthogonal polynomials, say \( \Phi_m(z) \), satisfy the recurrence relations
\begin{equation}
\label{3term}
\left\{
\begin{array}{l}
\displaystyle \Phi_{m+1}(z) = z\Phi_m(z) - \overline \alpha_m\Phi_m^*(z), \medskip \\
\displaystyle \Phi_{m+1}^*(z) = \Phi_m^*(z) - \alpha_mz\Phi_m(z),
\end{array}
\right.
\end{equation}
where the recurrence coefficients \( \{\alpha_m\} \) belong to the unit disk \( \mathbb{D} \) and are uniquely determined by the measure of orthogonality. Furthermore, the orthonormal polynomials, which we denote by \( \varphi_m(z) \), are given by
\begin{equation}
\label{ortho-norm}
\varphi_m(z) = \rho_m^{-1}\Phi_m(z), \quad \rho_m := \prod_{i=0}^{m-1}\sqrt{1-|\alpha_i|^2}.
\end{equation}
Since the functions \( f_m(z) \) in \eqref{Vanderbei} must be real-valued on the real line, we are only interested in real recurrence coefficients, i.e., \( \alpha_m\in(-1,1) \) for all \( m\geq0 \). It is known \cite{Yatt} that when \( m^p|\alpha_m| \) is a bounded sequence for some \( p>3/2 \), estimate \eqref{KacE} remains valid for random polynomials \eqref{Vanderbei} with \( f_m(z) = \varphi_m(z) \) given by \eqref{3term}--\eqref{ortho-norm}. Moreover, if the recurrence coefficients decay exponentially, it was shown by the authors in \cite{AlYa19} that the expected number of real zeros has a full asymptotic expansion of the form \eqref{Wilkins} with the constant term still given by \eqref{A0}.

The previous works suggest that the constant \( \pi/2 \) in front of \( \log(n+1) \) in \eqref{KacE} and \eqref{Wilkins} might change if the recurrence coefficients decay slowly or do not decay at all. In this note we support this guess by considering random polynomials of the form
\begin{equation}
\label{random-opuc}
P_n(z) = \eta_0\varphi_0(z;\alpha) + \eta_1 \varphi_1(z;\alpha) + \cdots +\eta_n\varphi_n(z;\alpha),
\end{equation}
which we call Kac-Geronimus polynomials, where \( \eta_i \) are i.i.d. standard real Gaussian random variables and
\begin{equation}
\label{Geron}
\varphi_m(z;\alpha) = \rho^{-m}\Phi_m(z;\alpha) , \quad \rho:= \sqrt{1-\alpha^2},
\end{equation}
are real Geronimus polynomials, that is, polynomials \( \Phi_m(z;\alpha) \) satisfying \eqref{3term} with \( \alpha_m=\alpha\in(-1,1) \) for all \( m\geq0 \). The measure of orthogonality for general Geronimus polynomials, i.e., \( \alpha_m=\alpha\in\D \), is explicitly known, see \cite[Section~1.6]{Simon1}, and is supported by
\[
\Delta_\alpha := \big\{e^{i \theta } : 2 \arcsin(|\alpha|) \leq \theta \leq 2 \pi - 2 \arcsin(|\alpha|) \big\}
\]
with a possible pure mass point, which is present if and only if \( |\alpha+1/2|>1/2 \). When \( \alpha=0 \), one can clearly see from \eqref{3term} that \( \Phi_m(z;0) = z^m \) and therefore Kac-Geronimus polynomials \eqref{random-opuc} specialize to Kac polynomials \eqref{kac}.
 
For random polynomials \eqref{Vanderbei} with \( f_m(z) = \varphi_m(z) \) given by \eqref{3term}--\eqref{ortho-norm} it can be easily shown using the Christoffel-Darboux formula, see \cite[Theorem~1.1]{Yatt}, that \eqref{real-intensity} can be rewritten as
\begin{equation}
\label{set-up}
\rho_n(x) = \frac1\pi\frac{\sqrt{1-h_{n+1}^2(x)}}{|1-x^2|}, \quad h_{n+1}(x) := \frac{(1-x^2)b_{n+1}^\prime(x)}{1-b_{n+1}^2(x)}, \quad b_{n+1}(x) := \frac{\varphi_{n+1}(x)}{\varphi_{n+1}^*(x)},
\end{equation}
where \( \varphi_{n+1}^*(x):=x^{n+1}\varphi_{n+1}(1/x) \) is the reciprocal polynomial (there is no need for conjugation as all the coefficients are real). 

\begin{theorem}
\label{thm:0}
Let \( P_n(z) \) be given by \eqref{random-opuc}--\eqref{Geron} with \( \alpha\in(-1,0)\cup(0,1) \). Define
\begin{equation}
\label{r}
r(z) := \sqrt{(z-1)^2+4\alpha^2z}
\end{equation}
to be the branch holomorphic in \( \mathbb{C} \backslash \Delta_\alpha \) such that \( r(z)/z \to 1\) as \( z\to \infty \). Then it holds that
\begin{equation}
\label{bn-asymp}
\lim_{n\to\infty} b_{n+1}(z) = \frac{-2\alpha}{r(z)+1-z}
\end{equation}
locally uniformly in \( \D \). Moreover, it holds that
\begin{equation}
\label{hn-asymp}
h_{n+1}(x) = -\alpha\frac{x+1}{r(x)}\left( 1 + \mathcal O\left((1-x)^2 (n+1) e^{-\sqrt{n+1}/\rho}\right) \right),
\end{equation}
for \( -1+(n+1)^{-1/2}\leq x \leq 1-\delta_\alpha^{n+1} \), where \( \mathcal O(\cdot) \) does not depend on \( n \) and \( \delta_\alpha := 0 \) when \( \alpha<0 \) while \( \delta_\alpha := ((1-\alpha)/(1+\alpha))^{1/3} \) when \( \alpha>0 \).
\end{theorem}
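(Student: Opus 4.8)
The starting point is that, with all Verblunsky coefficients equal to $\alpha$, the Szegő recursion \eqref{3term} is a constant-coefficient linear system. Writing
\[
\begin{pmatrix}\Phi_{m+1}(z)\\ \Phi_{m+1}^*(z)\end{pmatrix} = T(z)\begin{pmatrix}\Phi_m(z)\\ \Phi_m^*(z)\end{pmatrix}, \qquad T(z) := \begin{pmatrix} z & -\alpha \\ -\alpha z & 1\end{pmatrix},
\]
and using $\Phi_0=\Phi_0^*\equiv1$, one gets $(\Phi_m,\Phi_m^*)^{\mathsf T}=T(z)^m(1,1)^{\mathsf T}$, so the first task is to diagonalize $T(z)$. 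Its characteristic polynomial is $\lambda^2-(z+1)\lambda+z\rho^2$, whose discriminant is exactly $(z+1)^2-4z\rho^2=(z-1)^2+4\alpha^2z=r(z)^2$; hence the eigenvalues are $\lambda_\pm(z)=\tfrac12\bigl((z+1)\pm r(z)\bigr)$, with $\lambda_+$ the one for which $r(z)\sim z$ at infinity, and the eigenvectors may be taken as $(\alpha,\,z-\lambda_\pm)^{\mathsf T}$. Expanding $(1,1)^{\mathsf T}$ in this basis gives explicit rational coefficients $c_\pm(z)$ in $z,\alpha,r$; in particular $c_+(z)=(z-1-2\alpha+r)/(2\alpha r)$, which I will need to locate the zero responsible for the mass point. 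Since $b_{n+1}=\Phi_{n+1}/\Phi_{n+1}^*$ and the factors $\rho^{-(n+1)}$ cancel, this yields the closed form
\[
b_{n+1}(z) = \frac{\alpha\bigl(c_+\lambda_+^{\,n+1} + c_-\lambda_-^{\,n+1}\bigr)}{c_+\lambda_+^{\,n+1}(z-\lambda_+) + c_-\lambda_-^{\,n+1}(z-\lambda_-)}.
\]

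Establishing \eqref{bn-asymp} then reduces to showing that $\lambda_+$ dominates. I would first verify that on every compact subset of $\D$ one has $|\lambda_-(z)|<|\lambda_+(z)|$ and $c_+(z)\neq0$; the latter is immediate since a short computation shows $c_+$ can vanish only at $z=1\in\partial\D$ (and only when $\alpha>0$). Dividing numerator and denominator by $c_+\lambda_+^{\,n+1}$ and letting $n\to\infty$ kills the $\lambda_-$ terms and leaves $\alpha/(z-\lambda_+)$, which simplifies to $-2\alpha/(r+1-z)$; local uniform convergence follows from uniform control of $w(z):=\lambda_-(z)/\lambda_+(z)$ on compacta. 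The limiting $h$ built from this $b$ is a routine differentiation: with $b=-2\alpha/(r+1-z)$ one finds $1-b^2=2(1-x)\bigl[(1-x)-2\alpha^2+r\bigr]/(r+1-x)^2$ and $b'=-2\alpha\bigl[(1-x)-2\alpha^2+r\bigr]/\bigl(r(r+1-x)^2\bigr)$, so the bracket and $(r+1-x)^2$ cancel in $h=(1-x^2)b'/(1-b^2)$, leaving precisely $-\alpha(x+1)/r$, the leading term of \eqref{hn-asymp}.

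For the quantitative estimate \eqref{hn-asymp} the plan is to write $b_{n+1}=b\cdot(1+G_1w^{n+1})/(1+G_2w^{n+1})$ with explicit $G_1,G_2$, so that $b_{n+1}-b$ and $b_{n+1}'-b'$ are governed by $w^{n+1}$ and $(w^{n+1})'=(n+1)w^nw'$, the latter accounting for the factor $(n+1)$. Substituting into $h_{n+1}=(1-x^2)b_{n+1}'/(1-b_{n+1}^2)$ and comparing with $h$, the relative error splits as $\delta'/b'+2b\,\delta/(1-b^2)$ with $\delta:=b_{n+1}-b$, and everything hinges on the size of $w(x)^{n+1}$, where the two boundary layers enter. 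At $x=-1$ the eigenvalues are equimodular, $\lambda_\pm(-1)=\pm\rho$, so $|w(-1)|=1$; expanding, $|w(x)|=1-(x+1)/\rho+\cdots$, whence $|w(x)|^{n+1}\approx e^{-(n+1)(x+1)/\rho}$, which at the left cutoff $x=-1+(n+1)^{-1/2}$ equals $e^{-\sqrt{n+1}/\rho}$ and, since $|w|$ is largest there, dominates it throughout the interval — this is the source of the exponential factor and of the $(n+1)^{-1/2}$ cutoff (needed so that $(n+1)|w|^{n+1}\to0$). The other end is governed by the mass point: for $\alpha>0$ one has $c_+(1)=0$, so near $z=1$ the subdominant contribution is amplified by $c_-/c_+$, while simultaneously $1-b^2$ vanishes there because $b(1)=-1$. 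With $w(1)=(1-\alpha)/(1+\alpha)=\delta_\alpha^3$, the crossover where $c_+\lambda_+^{\,n+1}$ ceases to dominate occurs at $1-x\sim w(1)^{n+1}$, and taking $\delta_\alpha^{n+1}=w(1)^{(n+1)/3}$ keeps a safe exponential margin away from it while producing exactly the $(1-x)$-dependent prefactor displayed in \eqref{hn-asymp}.

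The step I expect to be the main obstacle is precisely this last error analysis near the mass point. Unlike the bare limit \eqref{bn-asymp}, which needs only strict domination of $\lambda_+$, the estimate \eqref{hn-asymp} requires uniform control of the nonlinear map $b\mapsto(1-x^2)b'/(1-b^2)$ in a regime where both $c_+$ and $1-b^2$ are becoming small, so that the naive bounds on $\delta$ and on $\delta'$ individually degenerate and must be weighed carefully against the vanishing of $1-b^2$; making the two cutoffs interact correctly with the differentiation-induced $(n+1)$ and with the $(1-x)$ behavior at $z=1$, uniformly in $x$, is the delicate part. By contrast, when $\alpha<0$ there is no mass point, $c_+(1)\neq0$, the factor $w^{n+1}$ decays exponentially at the right end with no amplification, and the estimate near $x=1$ is comparatively routine — which is exactly why one may take $\delta_\alpha=0$ in that case.
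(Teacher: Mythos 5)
Your derivation of the closed form for \( b_{n+1} \) and of the limit \eqref{bn-asymp} is correct and is, in substance, the paper's Lemma~\ref{lem:1}: your eigenvalues \( \lambda_\pm=\tfrac12\bigl((z+1)\pm r(z)\bigr) \) and ratio \( w=\lambda_-/\lambda_+ \) are exactly the paper's \( \phi/2 \), \( \psi/2 \) and \( \epsilon=\psi/\phi \); the only differences are that the paper imports the closed form from Simanek's Chebyshev-polynomial representation of \( \varphi_m,\varphi_m^* \) rather than diagonalizing the transfer matrix, and gets local uniformity from \( |b_{n+1}|<1 \) plus pointwise convergence rather than from uniform bounds on \( w \) on compacta. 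Your computation of the limit \( h(x)=-\alpha(x+1)/r(x) \) also checks out.

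The gap is exactly the step you defer: the quantitative estimate \eqref{hn-asymp}, which is the substance of the theorem. Your plan is perturbative: bound \( \delta=b_{n+1}-b \) and \( \delta' \), and control the relative error through the linearization \( \delta'/b'+2b\,\delta/(1-b^2) \). That expression is only the first-order term of the nonlinear map \( b\mapsto(1-x^2)b'/(1-b^2) \), and the regime you yourself single out (\( \alpha>0 \), \( x \) near \( 1 \), where \( c_+ \), \( 1-b^2 \) and \( 1-b_{n+1}^2 \) all degenerate) is precisely where the remainder cannot be dismissed; you call this ``the main obstacle'' and leave it unresolved, so no proof of \eqref{hn-asymp} --- in particular of the \( (1-x)^2 \) prefactor and of the uniformity on \( [-1+(n+1)^{-1/2},1-\delta_\alpha^{n+1}] \) --- is actually given. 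The paper never linearizes: in Lemma~\ref{lem:2} it computes \( 1-b_{n+1}^2 \) and \( b_{n+1}' \) exactly from the closed form and, after exact cancellations, arrives at the identity \eqref{hn-rep},
\[
h_{n+1}(x)=h(x)\left(1-\epsilon^{n+1}(x)\,\frac{\frac{n+1}{\alpha}\frac{(1-x)^2}{x}\,r(x)+2R(x)\bigl(1-\epsilon^{n+1}(x)\bigr)}{\bigl(1-\epsilon^{n+1}(x)\bigr)\bigl(S(x)+R(x)\epsilon^{n+1}(x)\bigr)}\right),
\]
with \( R=r+\alpha(1+x) \), \( S=r-\alpha(1+x) \); all of \eqref{hn-asymp} is then read off from this formula in Lemma~\ref{lem:3}, using \( S(x)R(x)=\rho^2(1-x)^2 \), the monotonicity of \( \epsilon \) along \( [-1,1] \) (which gives \( |\epsilon|^{n}\le C e^{-\sqrt{n+1}/\rho} \) at the left cutoff), and, for \( \alpha>0 \), a lower bound for \( S+R\epsilon^{n+1} \) of order \( (1-x)^2 \) valid when \( 1-x\ge\delta_\alpha^{n+1} \). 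Two features of this identity have no counterpart in your scheme and are what make the theorem come out: first, the \( (n+1) \)-term arrives already multiplied by \( (1-x)^2 \), which is the source of the prefactor in \eqref{hn-asymp} (for \( \alpha<0 \) one additionally uses \( R\lesssim(1-x)^2 \), since there \( S \) is bounded below); second, the relevant crossover near \( 1 \) is not where \( c_+\lambda_+^{n+1} \) loses dominance (\( 1-x\sim\epsilon(1)^{n+1} \), as you suggest) but where \( S\sim(1-x)^2 \) meets \( R\epsilon^{n+1} \), i.e.\ \( 1-x\sim\epsilon(1)^{(n+1)/2} \), which is why \( \delta_\alpha=\epsilon(1)^{1/3} \) leaves a margin. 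To complete your argument you would, in effect, have to reproduce Lemma~\ref{lem:2}'s exact computation; the linearized plan as stated does not yield \eqref{hn-asymp} in the window you flag.
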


Observe that \(b_{n+1}(1) = h_{n+1}(1) = 1 \) for all \( n \) and these equalities remain true in the limit when \( \alpha<0 \). However, \( b(1) = h(1) = -1 \) when \( \alpha> 0 \) . This change is due to a single zero of \( \varphi_m(z;\alpha) \) that approaches \( 1 \) as \( m\to\infty \) for every fixed \( \alpha>0 \), see Figure~\ref{fig:1}, and is the reason we need to introduce \( \delta_\alpha \) in \eqref{hn-asymp}.

\begin{figure}[ht!]
\begin{center}
\subfigure[\( b_4(x)\) and \( b(x) \)]{\includegraphics[scale=.3]{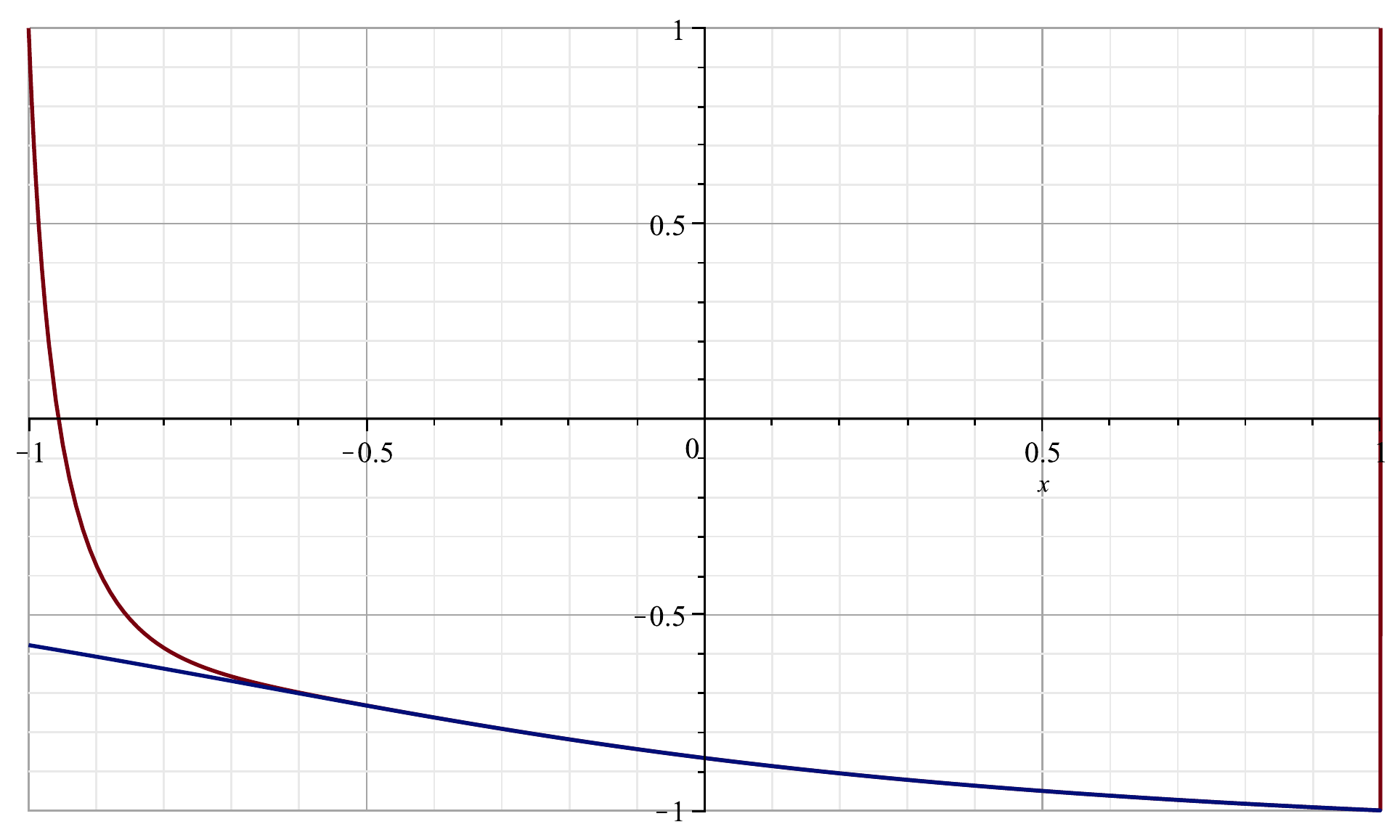}
}\quad
\subfigure[\( h_4(x)\) and \( h(x) \)]{\includegraphics[scale=.3]{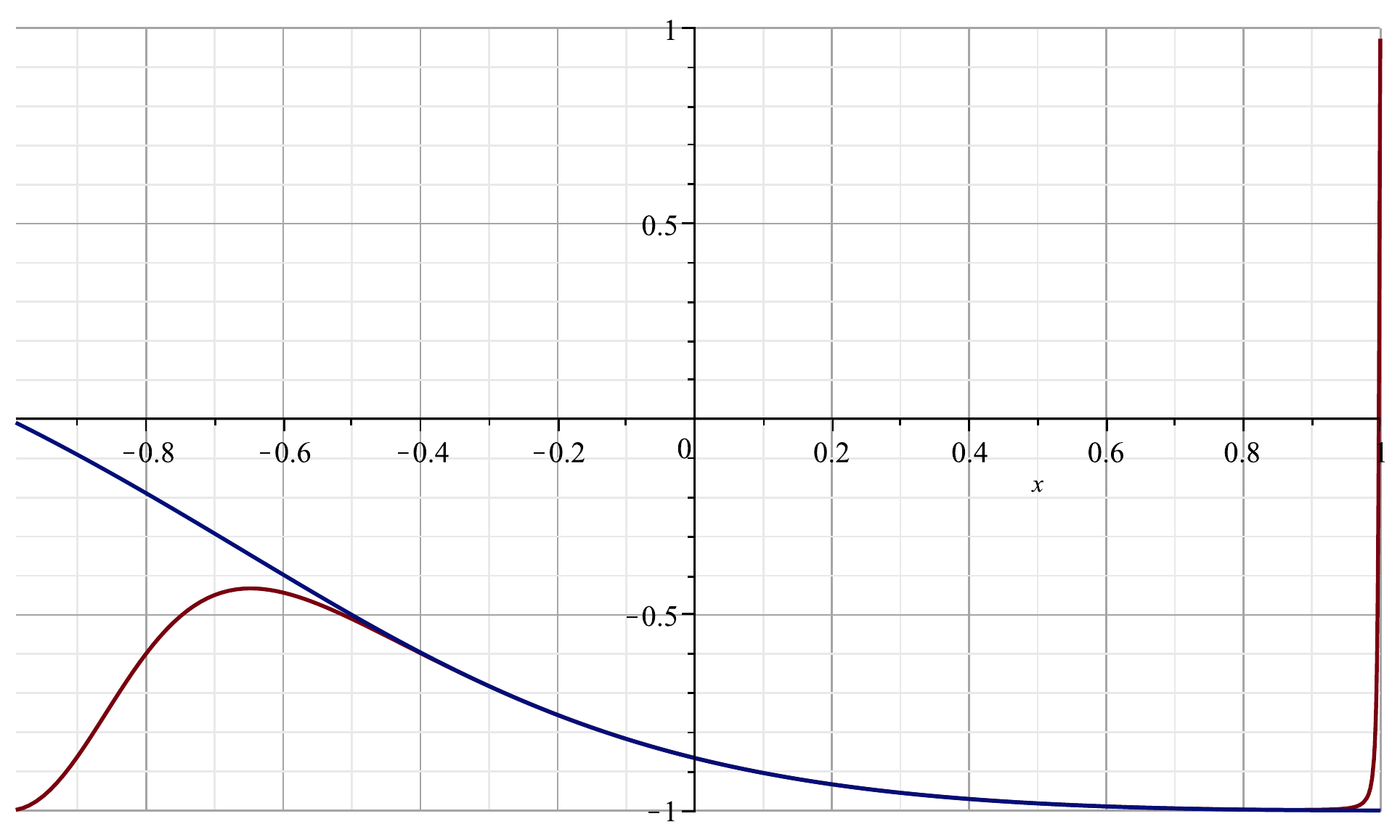}
}
\end{center}
\caption{\small The graphs of \( b_4(x)\) and \( b(x) \) (panel (a)) and \( h_4(x)\) and \( h(x) \) (panel (b)) on \( [-1,1] \) for \( \alpha=\sqrt3/2 \).}
\label{fig:1}
\end{figure} 

Let \( \mathbb E_n(\alpha) \) be the expected number of real zeros of random polynomials \eqref{random-opuc}--\eqref{Geron}. It is easy to see that \( b_m(1/x) = 1/b_m(x) \) and therefore \( b_m^\prime(1/x) = x^2b_m^\prime(x)/b_m^2(x) \). Thus, we get from \eqref{set-up} that \( h_m(1/x) = h_m(x) \) and therefore
\begin{equation}
\label{Enalpha}
\mathbb E_n(\alpha) = \frac2\pi\int_{-1}^1\frac{\sqrt{1-h_{n+1}^2(x)}}{1-x^2}\dd x.
\end{equation}
Using this formula we can prove the following theorem that constitutes the main result of this work.

\begin{theorem}
\label{thm:1}
Let \( P_n(z) \) be random polynomials given by \eqref{random-opuc}--\eqref{Geron} with \( \alpha\in(-1,0)\cup(0,1) \). Then there exist constants  \( A_p^{\alpha,(-1)^n} \), \( p\geq1 \), that do depend on the parity of \( n \), such that \( \mathbb E_n(\alpha) \), the expected number of real zeros of \( P_n(z) \), satisfies
\[
\mathbb E_n(\alpha) = \frac1\pi\log(n+1)+A_0^\alpha + \sum_{p=1}^{N-1}A_p^{\alpha,(-1)^n}(n+1)^{-p} + \mathcal O_N\left((n+1)^{-N}\right) 
\]
for any integer \( N \), all \( n \) large, where \( \mathcal O_N(\cdot) \) depends on \( N \), but is independent of \( n \), and
\[
A_0^\alpha = \frac{A_0+1+\mathrm{sgn}(\alpha)}2 + \frac1\pi\log\frac2{|\alpha|}
\]
with \( A_0 \) given by \eqref{A0} and \( \mathrm{sgn}(\alpha):=\alpha/|\alpha| \).
\end{theorem}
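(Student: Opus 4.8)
The plan is to analyze the integral \eqref{Enalpha} by first simplifying its integrand and then splitting $(-1,1)$ at the two cutoffs of Theorem~\ref{thm:0}. A direct computation with $r$ from \eqref{r} gives the identity $r^2(x)-\alpha^2(x+1)^2=(1-\alpha^2)(x-1)^2$, so the limiting function $h(x)=-\alpha(x+1)/r(x)$ obeys $\sqrt{1-h^2(x)}=\rho(1-x)/r(x)$ on $(-1,1)$ (where $r>0$), and the limiting integrand collapses to the simple $\rho/(r(x)(1+x))$. This is finite at $x=1$ but has a logarithmic singularity at $x=-1$; the $\log(n+1)$ will come from the interplay of this singularity with the boundary layer at $x=-1$. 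I would split $\int_{-1}^1$ into a bulk part $[-1+(n+1)^{-1/2},1-\delta_\alpha^{n+1}]$, a left part $[-1,-1+(n+1)^{-1/2}]$, and (only when $\alpha>0$) a right part $[1-\delta_\alpha^{n+1},1]$.

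For the bulk I would invoke \eqref{hn-asymp}, which replaces $h_{n+1}$ by $h$ up to a factor $1+\mathcal O((n+1)e^{-\sqrt{n+1}/\rho})$ that is negligible to all polynomial orders, and integrate $\rho/(r(x)(1+x))$ exactly via the antiderivative $F(x)=-\tfrac12\log\big|(2\rho-\rho(1+x)+r(x))/(1+x)\big|+\text{const}$. Expanding $F$ at the upper endpoint gives a constant plus exponentially small terms, while expanding at the lower endpoint $x=-1+(n+1)^{-1/2}$ produces $\tfrac{1}{2\pi}\log(n+1)$ together with a series in the \emph{half}-integer powers $(n+1)^{-k/2}$. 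For the finer regions \eqref{hn-asymp} is insufficient, so I would use that, since $\alpha_m\equiv\alpha$, the recurrence \eqref{3term} is a constant-coefficient system whose transfer matrix has eigenvalues $\lambda_\pm(z)=\tfrac12\big((z+1)\pm r(z)\big)$; diagonalizing yields a closed form $b_{n+1}=(N_1+N_2\mu^{n+1})/(M_1+M_2\mu^{n+1})$ with $\mu=\lambda_-/\lambda_+$ and explicit $N_i,M_i$. In the left boundary layer the scaling $x=-1+\rho s/(n+1)$ gives $\mu(-1)=-1$ and $\mu^{n+1}=(-1)^{n+1}e^{-s}(1+\mathcal O((n+1)^{-1}))$, whence a direct computation collapses \eqref{set-up} to $h_{n+1}=(-1)^n\,2s/(e^{s}-e^{-s})+\mathcal O((n+1)^{-1})$ and $\sqrt{1-h_{n+1}^2}=f(s)+\cdots$ with $f$ exactly as in \eqref{A0}. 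Since $\dd x/(1-x^2)\to \dd s/(2s)$, the left part contributes $\tfrac1\pi\int_0^{(n+1)^{1/2}/\rho}f(s)\,\dd s/s$, which splits off a second $\tfrac{1}{2\pi}\log(n+1)$ and the constant $\tfrac1\pi\big(\int_0^1 f/s+\int_1^\infty(f-1)/s-\log\rho\big)$; together with the bulk this reproduces $\tfrac1\pi\log(n+1)$ and, via \eqref{A0}, the value $A_0^\alpha$ for $\alpha<0$.

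The extra $+1$ in $A_0^\alpha$ for $\alpha>0$ comes entirely from the right part, where $h_{n+1}$ must swing from its bulk value $h(1^-)=-1$ to the exact value $h_{n+1}(1)=1$. Here the closed form is governed by $w_0:=\mu(1)^{n+1}=\big((1-\alpha)/(1+\alpha)\big)^{n+1}=\delta_\alpha^{3(n+1)}$ (this is precisely why $\delta_\alpha$ carries a cube root), and since $\mu$ has a critical point at $z=1$ the factor $\mu^{n+1}$ is essentially constant across the region. Expanding the closed form to the needed order yields $h_{n+1}(x)\approx(4\alpha w_0-\kappa(1-x)^2)/(4\alpha w_0+\kappa(1-x)^2)$ with $\kappa=\rho^2/(4\alpha)$, so the swing occurs on the scale $1-x\sim\sqrt{w_0}=\delta_\alpha^{3(n+1)/2}$, which lies safely inside $[1-\delta_\alpha^{n+1},1]$. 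Writing $\sigma=\kappa(1-x)^2/(4\alpha w_0)$ gives $\sqrt{1-h_{n+1}^2}=2\sqrt\sigma/(1+\sigma)$ and $\dd x/(1-x^2)=\dd\sigma/(2\sigma)$, so the right part contributes $\tfrac1\pi\int_0^\infty \dd\sigma/(\sqrt\sigma(1+\sigma))=1$, matching $\tfrac{1+\mathrm{sgn}(\alpha)}2$.

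To upgrade these leading computations to the full expansion I would carry each region to all orders in $(n+1)^{-1}$, using the closed form to expand $h_{n+1}$ in the boundary-layer variables $s$ and $\sigma$ with coefficients that are explicit functions of $s$ (resp.\ $\sigma$) and, through $(-1)^{n+1}$ in $\mu^{n+1}$, of the parity of $n$; squaring removes the parity at leading order, which is why $A_0^\alpha$ is parity-free while the $A_p^{\alpha,(-1)^n}$ are not. The main obstacle is the bookkeeping at the junction of the bulk and left regions: both separately generate the artificial half-integer powers $(n+1)^{-k/2}$ tied to the shared cutoff $x=-1+(n+1)^{-1/2}$ (from the lower limit of $F$ on one side and from integrating the $s$-polynomial corrections up to $s\sim(n+1)^{1/2}$ on the other). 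These must cancel in the sum, leaving only integer powers; verifying this cancellation term by term, while simultaneously tracking the parity-dependent pieces that survive into $A_p^{\alpha,(-1)^n}$, is the technical heart of the argument. The right-region corrections, by contrast, produce a clean integer-power series that matches the bulk upper endpoint, so once the left/bulk matching is under control the remaining assembly is routine.
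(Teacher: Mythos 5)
Your proposal follows essentially the same route as the paper's proof: the same three-region decomposition with cutoffs \( -1+(n+1)^{-1/2} \) and \( 1-\delta_\alpha^{n+1} \), the same closed form for \( b_{n+1} \) coming from the constant-coefficient recurrence (your \( \mu \) is the paper's \( \epsilon=\psi/\phi \) in Lemma~\ref{lem:1}), the same left boundary-layer scaling producing \( f \) and the half of \( \log(n+1) \) as in Lemmas~\ref{lem:4}--\ref{lem:8}, the same bulk treatment via \eqref{hn-asymp} and the explicit antiderivative as in Lemma~\ref{lem:9}, and the same right-edge computation contributing \( 1 \) as in Lemma~\ref{lem:10}, while the junction cancellation you identify as the technical heart is exactly what the paper handles by keeping \( \mathcal L\left(-1+1/\sqrt{n+1}\right) \) unexpanded so that it cancels identically between Lemmas~\ref{lem:8} and~\ref{lem:9}. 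The only blemish is a harmless factor-of-two slip in the right region: \( \dd x/(1-x^2) \) corresponds to \( -\dd\sigma/(4\sigma) \), not \( \dd\sigma/(2\sigma) \), which is what your final (correct) evaluation \( \frac1\pi\int_0^\infty \dd\sigma/\left(\sqrt\sigma(1+\sigma)\right)=1 \) actually uses.
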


Notice that \( A_0^{|\alpha|} = A_0^{-|\alpha|} + 1\). This is due to the fact that polynomials \( \varphi_m(x;|\alpha|) \) have a zero exponentially close to \( 1 \) while polynomials \( \varphi_m(x;-|\alpha|) \) do not have such a zero.

\section{Proof of Theorem~\ref{thm:0}}

\begin{lemma}
\label{lem:1}
It holds that
\begin{equation}
\label{b-value}
b_{n+1}(z) = \frac{ \phi(z) - 2(1+\alpha) - \epsilon^{n+1}(z) (\psi(z) - 2(1+\alpha)) }{ \phi(z) - 2(1+\alpha)z - \epsilon^{n+1}(z) (\psi(z) - 2(1+\alpha)z)  }
\end{equation}
where  \(\phi(z) := z+1 + r(z) \), \( \psi(z) := z+1 - r(z) \), \( \epsilon(z) := \psi(z)/\phi(z) \), and \( r(z) \) was defined in \eqref{r}. In particular, \eqref{bn-asymp} takes place.
\end{lemma}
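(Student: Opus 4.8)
The plan is to reduce everything to the transfer matrix of the recurrence \eqref{3term} and diagonalize it. First I would note that the normalizing constants in \eqref{Geron} cancel in the ratio, so that \( b_{n+1}(z)=\Phi_{n+1}(z;\alpha)/\Phi_{n+1}^*(z;\alpha) \). Writing \eqref{3term} with \( \alpha_m\equiv\alpha\in(-1,1) \) in matrix form gives
\[
\begin{pmatrix}\Phi_{m+1}(z)\\ \Phi_{m+1}^*(z)\end{pmatrix}=T(z)\begin{pmatrix}\Phi_{m}(z)\\ \Phi_{m}^*(z)\end{pmatrix},\qquad T(z):=\begin{pmatrix} z & -\alpha\\ -\alpha z & 1\end{pmatrix},
\]
and since \( \Phi_0\equiv\Phi_0^*\equiv1 \) we get \( (\Phi_{n+1},\Phi_{n+1}^*)^{\mathsf T}=T(z)^{n+1}(1,1)^{\mathsf T} \). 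The characteristic polynomial of \( T(z) \) is \( \lambda^2-(z+1)\lambda+z(1-\alpha^2) \), whose discriminant is exactly \( (z-1)^2+4\alpha^2z=r(z)^2 \); hence the eigenvalues are \( \lambda_\pm=\tfrac12(z+1\pm r(z)) \), i.e. \( \tfrac12\phi(z) \) and \( \tfrac12\psi(z) \). This is precisely why \( \phi \), \( \psi \), and \( \epsilon=\psi/\phi=\lambda_-/\lambda_+ \) enter the final answer.

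Next I would diagonalize. Since \( \alpha\neq0 \), the vectors \( u_\pm=(\alpha,\,z-\lambda_\pm)^{\mathsf T} \) are eigenvectors, and solving \( (1,1)^{\mathsf T}=c_+u_++c_-u_- \) by Cramer's rule (the determinant being \( \alpha(\lambda_+-\lambda_-)=\alpha\,r(z) \)) yields
\[
\alpha r(z)\,c_+ = z-\lambda_--\alpha,\qquad \alpha r(z)\,c_-=\alpha-z+\lambda_+.
\]
Then \( \Phi_{n+1}=\alpha(c_+\lambda_+^{n+1}+c_-\lambda_-^{n+1}) \) and \( \Phi_{n+1}^*=(z-\lambda_+)c_+\lambda_+^{n+1}+(z-\lambda_-)c_-\lambda_-^{n+1} \). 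Forming the ratio, dividing numerator and denominator by \( \lambda_+^{n+1} \) (which produces the factor \( \epsilon^{n+1}=(\lambda_-/\lambda_+)^{n+1} \)), and repeatedly substituting \( r^2=(z-1)^2+4\alpha^2z \) collapses the cross terms, e.g. \( (z-1-r)(z-1+r-2\alpha)=2\alpha\,[\,1-z(1+2\alpha)+r\,] \). A common factor \( \alpha/2 \) then cancels between numerator and denominator, and reading off \( \phi-2(1+\alpha)=z-1+r-2\alpha \), \( \psi-2(1+\alpha)=z-1-r-2\alpha \) together with their \( z \)-weighted analogues produces exactly \eqref{b-value}. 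This part is a mechanical, if slightly lengthy, computation.

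Finally, to deduce \eqref{bn-asymp} I would show \( |\epsilon(z)|<1 \) on \( \mathbb{D} \). Parametrizing \( z=e^{\ic\theta} \) one computes \( r^2=4e^{\ic\theta}(\alpha^2-\sin^2(\theta/2)) \), so on \( \Delta_\alpha \) (where \( \sin^2(\theta/2)\geq\alpha^2 \)) the two eigenvalues have equal modulus \( \sqrt{1-\alpha^2} \) and \( |\epsilon|=1 \), while on the complementary arc \( |\epsilon|<1 \). Since \( \phi \) does not vanish in \( \mathbb{D} \) (the relation \( \phi=0 \) forces \( z=0 \), where \( \phi(0)=2 \) for the chosen branch), \( \epsilon \) is holomorphic on \( \mathbb{D} \), and the maximum principle gives \( |\epsilon(z)|<1 \) there, with \( |\epsilon|\leq q<1 \) on every compact subset. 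Hence \( \epsilon^{n+1}\to0 \) locally uniformly and \eqref{b-value} tends to \( (\phi-2(1+\alpha))/(\phi-2(1+\alpha)z) \); a direct cross-multiplication, again using the \( r^2 \) identity, verifies that this equals \( -2\alpha/(r+1-z) \). The main obstacle is this modulus estimate: the branch of \( r \) must be tracked carefully across \( \partial\mathbb{D} \), and the maximum principle justified near the branch points at the endpoints of \( \Delta_\alpha \), where \( \epsilon \) stays bounded but \( r \) is singular.
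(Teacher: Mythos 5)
Your proposal is correct, and it reaches \eqref{b-value} by a genuinely different route than the paper. For the identity itself, you diagonalize the transfer matrix of \eqref{3term} from scratch, observing that its characteristic polynomial \( \lambda^2-(z+1)\lambda+z(1-\alpha^2) \) has discriminant \( r(z)^2 \), so the eigenvalues are \( \phi(z)/2 \) and \( \psi(z)/2 \); the paper instead quotes Simanek's closed form for \( \varphi_m \) and \( \varphi_m^* \) in terms of Chebysh\"ev polynomials \( U_m \) evaluated at \( y(z)=(z+1)/(2\rho\sqrt z) \). The two are really the same computation in disguise, since \( \rho\sqrt z\,\big(y\pm\sqrt{y^2-1}\big)=\phi/2,\ \psi/2 \) and the closed form of \( U_m \) is exactly the solved two-term recursion; your version buys self-containedness (no citation needed, and the spectral meaning of \( \phi,\psi,\epsilon=\lambda_-/\lambda_+ \) is transparent), while the paper's buys brevity. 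Your Cramer's-rule coefficients and the collapse of cross terms via \( r^2=(z-1)^2+4\alpha^2z \) check out exactly, producing the common factor \( \alpha/2 \) as you say. The second real difference is the proof that \( |\epsilon|<1 \) on \( \mathbb D \): the paper never goes to the boundary, using instead that \( y(\mathbb D) \) lies in the right half-plane off \( [0,1/\rho] \), whence \( |\epsilon|=\big|y+\sqrt{y^2-1}\big|^{-2}<1 \) pointwise inside the disk; you compute boundary moduli (\( |\epsilon|=1 \) on \( \Delta_\alpha \), \( <1 \) on the complementary arc) and invoke the maximum principle. Your approach also works, and the obstacle you flag at the end is in fact not one: although \( r \) is branched at the endpoints of \( \Delta_\alpha \), one has \( r\to0 \) there while \( z+1 \) stays away from \( 0 \), so \( \epsilon=(z+1-r)/(z+1+r) \) extends \emph{continuously} to \( \overline{\mathbb D} \) with value \( 1 \) at the branch points, and the ordinary maximum modulus principle for functions continuous on \( \overline{\mathbb D} \) and holomorphic in \( \mathbb D \) applies verbatim (strict inequality inside since \( \epsilon \) is non-constant). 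A final minor contrast: your bound \( |\epsilon|\le q<1 \) on compacts gives locally uniform convergence with a geometric rate directly, whereas the paper gets local uniformity more softly from pointwise convergence plus the normal-family bound \( |b_{n+1}|<1 \) on \( \mathbb D \); both then finish with the same rationalization \( \big(z-(1+2\alpha)+r\big)\big/\big(1-(1+2\alpha)z+r\big)=-2\alpha/(r+1-z) \).
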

\begin{proof}
Let \( U_m(y) \) be the degree \( m \) Chebysh\"ev polynomial of the second kind, that is,
\[
U_m(y) = \frac{\left( y + \sqrt{y^2-1}  \right)^{m+1} - \left( y - \sqrt{y^2-1} \right)^{m+1}}{2 \sqrt{y^2-1} },
\]
where for definiteness we take the branch \( \sqrt{y^2-1} = y + \mathcal O(1) \) as \( y\to\infty \) with the cut along \( [-1,1] \). It has been shown in \cite[Theorem~3.1]{Simanek18} that 
\begin{equation}
\label{varphi-asymp}
\left\{
\begin{array}{l}
\displaystyle \varphi_{m}(z;\alpha) = z^{m/2} \left( U_m \left( \frac{z+1}{2\rho \sqrt{z}} \right) - \frac{1+\overline\alpha}{\rho \sqrt{z}} U_{m-1}\left( \frac{z+1}{2\rho \sqrt{z}} \right)  \right), \medskip \\
\displaystyle  \varphi_m^*(z;\alpha) = z^{m/2} \left( U_m\left( \frac{z+1}{2\rho \sqrt{z}} \right) - \frac{\sqrt{z}(1+\alpha)}{\rho } U_{m-1}\left( \frac{z+1}{2\rho \sqrt{z}} \right)  \right),
\end{array}
\right.
\end{equation}  
where \( U_{-1}(y) \equiv 0 \) and we take the branch \( \sqrt z \) that is positive for positive reals (of course, in our case \( \overline\alpha=\alpha\)). Observe that the map
\[
y(z) = (z+1)/(2\rho\sqrt z)
\]
takes \( \D \) into \( \{\mathrm{Re}(z)>0\}\setminus[0,1/\rho] \), the right half-plane with the real segment \( [0,1/\rho] \) removed, and its boundary values on \( \Delta_\alpha \) cover the real interval \( [0,1] \) twice. Therefore, 
\[ 
\sqrt{y(z)^2-1}= r(z)/(2\rho\sqrt z), \quad z\in\D.
\]
In particular, it follows from \eqref{varphi-asymp} that \eqref{b-value} holds. Observe that
\begin{equation}
\label{est-e}
|\epsilon(z)| = \left| \frac{y-\sqrt{y^2-1}}{y+\sqrt{y^2-1}}\right| = \left|y+\sqrt{y^2-1}\right|^{-2} <1
\end{equation}
 for \( |z|<1 \). Hence, \( b_{n+1}(z) \) converges pointwise and therefore locally uniformly (\(|b_{n+1}(z)|<1\) for \( z\in\D \)) to
\[
\frac{ z - (1+2\alpha) + r(z)}{ 1 - (1+2\alpha)z + r(z)  } = \frac{ z - (1+2\alpha) + r(z)}{ 1 - (1+2\alpha)z + r(z)  }\frac{ z - (1+2\alpha) - r(z)}{ z - (1+2\alpha) - r(z)} = \frac{-2\alpha}{r(z)+1-z}. \qedhere
\]
\end{proof}

\begin{lemma}
\label{lem:2}
Let \( h(x) := -\alpha(x+1)/r(x) \). It holds that
\begin{equation}
\label{hn-rep}
h_{n+1}(x) = h(x) \left(1 - \epsilon^{n+1}(x) \frac{\frac{n+1}\alpha\frac{(1-x)^2}x r(x) + 2R(x)(1-\epsilon^{n+1}(x))}{(1-\epsilon^{n+1}(x))(S(x)+R(x)\epsilon^{n+1}(x))}\right),
\end{equation}
where \( R(x) := r(x) + \alpha(1+x) \) and \( S(x) := r(x) - \alpha(1+x) \).
\end{lemma}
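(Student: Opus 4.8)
The plan is to start from the identity $h_{n+1}(x) = (1-x^2)b_{n+1}'(x)/(1-b_{n+1}^2(x))$ in \eqref{set-up} and feed in the closed form for $b_{n+1}$ furnished by Lemma~\ref{lem:1}, written as $b_{n+1}=N/D$ with $N = \phi - 2(1+\alpha) - \epsilon^{n+1}(\psi - 2(1+\alpha))$ and $D = \phi - 2(1+\alpha)x - \epsilon^{n+1}(\psi - 2(1+\alpha)x)$. The device that keeps the computation manageable is the logarithmic-derivative identity
\[
\frac{b_{n+1}'}{1 - b_{n+1}^2} = \frac12\frac{\dd}{\dd x}\log\frac{1+b_{n+1}}{1 - b_{n+1}} = \frac12\left(\frac{(D+N)'}{D+N} - \frac{(D-N)'}{D-N}\right),
\]
which replaces the differentiation of a quotient by two logarithmic derivatives of the conveniently factored numerator and denominator of $1 \pm b_{n+1} = (D\pm N)/D$. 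So multiplying by $(1-x^2)/2$ gives $h_{n+1}$ directly in terms of $D\pm N$.

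First I would compute the two combinations straight from the definitions of $\phi$, $\psi$, $R$ and $S$: the $\phi$-contributions and the $\psi$-part of the $\epsilon^{n+1}$-term cancel, and using $\phi - (1+\alpha)(x+1) = S$ together with $\psi - (1+\alpha)(x+1) = -R$ one finds
\[
D - N = 2(1+\alpha)(1-x)\bigl(1-\epsilon^{n+1}\bigr), \qquad D + N = 2\bigl(S + R\epsilon^{n+1}\bigr).
\]
Next I would assemble a short table of auxiliary identities, all consequences of $r^2 = (x-1)^2 + 4\alpha^2 x$ and $\phi\psi = 4\rho^2 x$: the derivative $r' = (x-1+2\alpha^2)/r$; the logarithmic derivative $(\epsilon^{n+1})' = (n+1)\tfrac{1-x}{xr}\epsilon^{n+1}$, obtained from $\epsilon'/\epsilon = (1-x)/(xr)$; and the three algebraic relations $R+S = 2r$, $RS = \rho^2(1-x)^2$, and $SR' - S'R = 4\alpha(1-x)\rho^2/r$, the last resting on $r - (x+1)r' = 2(1-x)\rho^2/r$. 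These are the only steps with any content, and they are precisely what makes the final simplification telescope.

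Then I would substitute the factored forms into the logarithmic-derivative expression and subtract off the pointwise limit $h(x) = -\alpha(x+1)/r(x)$, which is recovered by sending $\epsilon^{n+1}\to 0$ and checking $S'/S + 1/(1-x) = -2\alpha/(r(1-x))$. Collecting $h_{n+1} - h$ over the common denominator $S\,(S+R\epsilon^{n+1})\,(1-\epsilon^{n+1})$, the two terms carrying $(\epsilon^{n+1})'$ merge via $R+S = 2r$ into a single clean $(n+1)$-term proportional to $S(1-x)/x$, while the remaining piece contributes the factor $SR'-S'R$. A final application of $RS = \rho^2(1-x)^2$ cancels the spurious factor of $S$ left in the denominator and rewrites $\rho^2(1-x)^2/S$ as $R$, delivering exactly \eqref{hn-rep}.

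The main obstacle is purely organizational rather than analytic: the raw quotient-rule computation of $b_{n+1}'$ is unwieldy and easy to mishandle, so the whole argument hinges on committing to the logarithmic-derivative route and on recognizing the handful of identities above — especially $R+S = 2r$ and $RS = \rho^2(1-x)^2$ — before expanding anything. Once those are in place, no genuine estimate is needed; what remains is disciplined bookkeeping of the cancellations.
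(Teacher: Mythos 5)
Your proof is correct, and it organizes the computation differently from the paper. The paper works with the quotient directly: from \eqref{b-value} it writes \( b_{n+1}=1-\lambda(1-x)(1-\epsilon^{n+1})/D \), \( \lambda=2(1+\alpha) \), computes \( 1-b_{n+1}^2=2\lambda(1-x)(1-\epsilon^{n+1})(S+R\epsilon^{n+1})/D^2 \), and then differentiates \( b_{n+1} \) by the quotient rule, splitting the resulting numerator into four pieces whose evaluation (\( N_1=-2\alpha S/r \), \( N_2=2r=R+S \), \( N_3=4\alpha \), \( N_4=-2\alpha R/r \)) is the bulk of the work. You avoid the quotient rule altogether via the algebraic identity \( 2b_{n+1}'/(1-b_{n+1}^2)=(D+N)'/(D+N)-(D-N)'/(D-N) \), differentiating the factored quantities \( D+N=2(S+R\epsilon^{n+1}) \) and \( D-N=2(1+\alpha)(1-x)(1-\epsilon^{n+1}) \) term by term; these are exactly the factorizations the paper finds implicitly, since its expression for \( 1-b_{n+1}^2 \) is \( (D-N)(D+N)/D^2 \). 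I verified your supporting identities --- \( \epsilon'/\epsilon=(1-x)/(xr) \) (the paper's \eqref{e-prime}), \( R+S=2r \), \( RS=\rho^2(1-x)^2 \) (the paper's \eqref{SR}, which it only needs later, in Lemma~\ref{lem:3}), \( SR'-S'R=4\alpha(1-x)\rho^2/r \), and \( S'/S+1/(1-x)=-2\alpha/(r(1-x)) \) --- and the assembly of \( h_{n+1}-h \) over the common denominator does telescope to exactly \eqref{hn-rep}. What the logarithmic-derivative route buys is freedom from the four-term quotient-rule bookkeeping and the separate tracking of which terms carry \( \epsilon' \); the small price is that subtracting \( h \) first drags in the two extra identities \( SR'-S'R \) and \( RS=\rho^2(1-x)^2 \), neither of which the paper's direct assembly needs in this lemma. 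The two arguments are the same half page of algebra at heart, but yours is tighter and harder to get wrong.
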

\begin{proof}
It follows from \eqref{b-value} that
\[
b_{n+1}(x) = 1  - \lambda \frac{(1-x)(1-\epsilon^{n+1}(x))}{D(x)},
\]
where \( \lambda := 2(1+\alpha) \) and  \( D(x) := \phi(x) - \lambda x - \epsilon^{n+1}(x)(\psi(x)-\lambda x) \). It can be readily checked that
\[
1- b_{n+1}^2(x) = 2\lambda\frac{(1-x)(1-\epsilon^{n+1}(x))(S(x)+R(x)\epsilon^{n+1}(x))}{D^2(x)}.
\]
Observe that
\[
D^\prime(x) = \phi^\prime(x) - \lambda - (n+1)\epsilon^n(x)\epsilon^\prime(x)(\psi(x)-\lambda x) - \epsilon^{n+1}(x)(\psi^\prime(x)-\lambda).
\]
It further holds that
\begin{eqnarray*}
b_{n+1}^\prime(x) & = & \lambda \frac{D(x)(1-\epsilon^{n+1}(x)+(n+1)(1-x)\epsilon^n(x)\epsilon^\prime(x)) + D^\prime(x)(1-x)(1-\epsilon^{n+1}(x))}{D^2(x)} \\
& =: & \lambda\frac{N_1(x) + (n+1)(1-x)\epsilon^n(x)\epsilon^\prime(x)N_2(x) + N_3(x)\epsilon^{n+1}(x) + N_4(x)\epsilon^{2(n+1)}(x) }{D^2(x)},
\end{eqnarray*}
where \( N_3(x), N_4(x) \) do not contain terms with \( \epsilon^\prime(x) \). We have that
\begin{eqnarray*}
N_1(x) &=& \phi(x)-\lambda x + (1-x)(\phi^\prime(x)-\lambda)  = -2\alpha + r(x) + r^\prime(x)(1-x) \\
&=& -2\alpha + 2\alpha^2(1+x)/r(x) = -2\alpha S(x)/r(x).
\end{eqnarray*}
Furthermore, we have that
\[
N_2(x) = D(x) - (\psi(x)-\lambda x)(1-\epsilon^{n+1}(x)) = 2r(x) = R(x) + S(x).
\]
It also holds that
\[
N_3(x) = - (\phi(x)-\lambda x) - (\psi(x) -\lambda x) - (1-x)(\psi^\prime(x)-\lambda+\phi^\prime(x)-\lambda) = 4\alpha.
\]
Finally, similarly to \( N_1(x) \), we have that
\[
N_4(x) = \psi(x) - \lambda x + (1-x)(\psi^\prime(x)-\lambda) = -2\alpha\big(R(x)/r(x)\big).
\]
Since
\begin{equation}
\label{e-prime} 
\epsilon^\prime(x) = \big((1-x)/x\big)\big(\epsilon(x)/r(x)\big),
\end{equation}
it follows from \eqref{set-up} that
\[
h_{n+1}(x) = h(x)\frac{(1-\epsilon^{n+1}(x))(S(x)-R(x)\epsilon^{n+1}(x)) -\frac{n+1}\alpha\frac{(1-x)^2}xr(x)\epsilon^{n+1}(x)}{(1-\epsilon^{n+1}(x))(S(x)+R(x)\epsilon^{n+1}(x))}
\]
from which the desired claim easily follows.
\end{proof}

\begin{lemma}
\label{lem:3}
Formula \eqref{hn-asymp} takes place.
\end{lemma}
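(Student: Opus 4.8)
The plan is to start from Lemma~\ref{lem:2}, which we read as \( h_{n+1}(x) = h(x)\big(1+E_n(x)\big) \) with
\[
E_n(x) = -\epsilon^{n+1}(x)\,\frac{\frac{n+1}{\alpha}\frac{(1-x)^2}{x}r(x) + 2R(x)\big(1-\epsilon^{n+1}(x)\big)}{\big(1-\epsilon^{n+1}(x)\big)\big(S(x)+R(x)\epsilon^{n+1}(x)\big)},
\]
so that \eqref{hn-asymp} is exactly the bound \( E_n(x) = \mathcal O\big((1-x)^2(n+1)e^{-\sqrt{n+1}/\rho}\big) \), uniform on \( [-1+(n+1)^{-1/2},\,1-\delta_\alpha^{n+1}] \). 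All of the exponential gain must come from \( \epsilon^{n+1}(x) \), so the first step is a boundary-layer analysis at \( x=-1 \). Writing \( t=1+x \), the identity \( r^2(x) = 4\rho^2(1-t)+t^2 \) together with \( \epsilon=\psi/\phi \) gives \( \log|\epsilon(x)| = -(1+x)/\rho + \mathcal O\big((1+x)^2\big) \) as \( x\to-1 \); evaluating at the left endpoint \( 1+x=(n+1)^{-1/2} \) yields \( |\epsilon|^{n+1} = \mathcal O\big(e^{-\sqrt{n+1}/\rho}\big) \). I would then use \( |\epsilon(z)|=|y+\sqrt{y^2-1}|^{-2} \) from \eqref{est-e} to show that \( |\epsilon(x)| \) decreases on \( (-1,0) \) and increases on \( (0,1) \) up to \( |\epsilon(1)|=(1-|\alpha|)/(1+|\alpha|)<1 \); hence for large \( n \) the left endpoint governs the supremum and \( \sup_x|\epsilon(x)|^{n+1}=\mathcal O\big(e^{-\sqrt{n+1}/\rho}\big) \).

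Next I would estimate the rational factor. The apparent pole of \( \tfrac1x \) at \( x=0 \) is fictitious: from \( \phi\psi=4\rho^2 x \) one gets \( \epsilon(x)/x = 4\rho^2/\phi^2(x) \), which is bounded since \( \phi(x)=x+1+r(x)\ge r(x)\ge 2|\alpha|\rho \) on the interval, so \( \frac{n+1}{\alpha}\frac{(1-x)^2}{x}r\,\epsilon^{n+1} = \frac{n+1}{\alpha}(1-x)^2 r\,\epsilon^{n}\cdot\frac{4\rho^2}{\phi^2} \) is controlled by \( (n+1)(1-x)^2|\epsilon|^{n} \). For the denominator the key identity is \( R(x)S(x)=(1-x)^2\rho^2 \), combined with \( r(x)\ge 2|\alpha|\rho \). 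When \( \alpha<0 \) this gives \( S=r+|\alpha|(1+x)\ge 2|\alpha|\rho \) bounded away from \( 0 \) and \( R=(1-x)^2\rho^2/S\le C(1-x)^2 \); since \( |\epsilon|^{n+1} \) is small, \( |1-\epsilon^{n+1}|\ge\tfrac12 \) and \( |S+R\epsilon^{n+1}|\ge\tfrac12 S\ge|\alpha|\rho \), and the two numerator summands contribute \( \mathcal O\big((n+1)(1-x)^2e^{-\sqrt{n+1}/\rho}\big) \) and \( \mathcal O\big((1-x)^2e^{-\sqrt{n+1}/\rho}\big) \). This settles \eqref{hn-asymp} for \( \alpha<0 \) all the way to \( x=1 \), consistent with \( \delta_\alpha=0 \).

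The main obstacle is \( \alpha>0 \) near \( x=1 \). There \( R(1)=4\alpha \) but \( S(1)=0 \), and \( RS=(1-x)^2\rho^2 \) forces a double zero \( S(x)\asymp(1-x)^2 \), so the factor \( S+R\epsilon^{n+1} \) degenerates and is no longer bounded below by a constant. The purpose of the cutoff \( x\le 1-\delta_\alpha^{n+1} \), with \( \delta_\alpha^3=\epsilon(1)=(1-\alpha)/(1+\alpha) \), is precisely to keep the vanishing term \( S\asymp(1-x)^2 \) dominant over the geometrically small \( R\epsilon^{n+1}\asymp\epsilon(1)^{n+1} \): at the cutoff \( (1-x)^2=\delta_\alpha^{2(n+1)} \) while \( \epsilon^{n+1}\asymp\delta_\alpha^{3(n+1)} \), so \( S\gg R\epsilon^{n+1} \) and \( |S+R\epsilon^{n+1}|\ge c(1-x)^2 \) throughout the admissible range. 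Feeding this lower bound into \( E_n \) and using \( \sup_x|\epsilon|^{n+1}=\mathcal O\big(e^{-\sqrt{n+1}/\rho}\big) \) produces the estimate. I expect the delicate point to be making the competition between \( S\asymp(1-x)^2 \) and \( R\epsilon^{n+1} \) quantitative and uniform right up to the cutoff, since \( E_n \) is largest there; this is exactly the threshold past which the single zero of \( \varphi_m(\cdot;\alpha) \) approaching \( 1 \) destroys the approximation \( h_{n+1}\approx h \), and it is what distinguishes the sign of \( \alpha \) in the statement.
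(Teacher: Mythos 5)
Your setup and your treatment of \( \alpha<0 \) coincide with the paper's own proof: the same reduction to the relative error \( E_n \) in \eqref{hn-rep}, the same left-endpoint estimate \( \sup|\epsilon|^{n+1}=\mathcal O\big(e^{-\sqrt{n+1}/\rho}\big) \) obtained from monotonicity of \( |y+\sqrt{y^2-1}| \), and the same identities \( \phi\psi=4\rho^2x \) (to remove the spurious pole at \( x=0 \)) and \( R(x)S(x)=\rho^2(1-x)^2 \) (to get \( R=\mathcal O((1-x)^2) \) while \( S \) stays bounded below). That part is complete. The genuine gap is exactly the step you flagged and postponed: for \( \alpha>0 \) on \( [0,1-\delta_\alpha^{n+1}] \), ``feeding the lower bound \( S+R\epsilon^{n+1}\ge c(1-x)^2 \) into \( E_n \) and using \( \sup_x|\epsilon|^{n+1}=\mathcal O(e^{-\sqrt{n+1}/\rho}) \)'' does not produce the estimate. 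The numerator of \( E_n \) contains \( 2R(1-\epsilon^{n+1}) \) with \( R(1)=4\alpha\neq0 \), so that term carries no factor \( (1-x)^2 \), and your bound reads
\[
|E_n| \;\lesssim\; (n+1)e^{-\sqrt{n+1}/\rho} \;+\; \frac{e^{-\sqrt{n+1}/\rho}}{(1-x)^2},
\]
whose second term is unbounded on the admissible range: at \( x=1-\delta_\alpha^{n+1} \) one has \( (1-x)^2=\epsilon^{2(n+1)/3}(1) \), which decays like \( e^{-c(n+1)} \) and is therefore vastly smaller than \( e^{-\sqrt{n+1}/\rho} \). There are two distinct exponential scales in this problem and they cannot be conflated: \( e^{-\sqrt{n+1}/\rho} \) is the size of \( |\epsilon|^{n+1} \) at the \emph{left} edge \( x=-1+(n+1)^{-1/2} \), while on \( [0,1] \) one has the far stronger bound \( \epsilon^{n+1}(x)\le\epsilon^{n+1}(1)=\delta_\alpha^{3(n+1)} \) (\( \epsilon \) is positive and increasing there). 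The paper's proof uses precisely this: it gives \( 2R\epsilon^{n+1}/S\lesssim \epsilon^{n+1}(1)/\delta_\alpha^{2(n+1)}=\delta_\alpha^{n+1} \), exponentially small in \( n \). You actually invoke the correct scale \( \epsilon^{n+1}\asymp\delta_\alpha^{3(n+1)} \) when comparing \( S \) with \( R\epsilon^{n+1} \), but then revert to the global sup in the concluding step, which is where the argument collapses. (Note also that the denominator bound \( S+R\epsilon^{n+1}\ge(\rho^2/4)(1-x)^2 \) holds on all of \( [0,1] \) for free, since both terms are nonnegative; the cutoff is not needed for the denominator but for keeping \( \epsilon^{n+1}(1)/(1-x)^2 \) small.)

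A further caution, which confirms that your ``delicate point'' is genuinely delicate: even with the correct scale, the \( 2R \)-term is only \( \mathcal O(\delta_\alpha^{n+1}) \) with no \( (1-x)^2 \) prefactor, and evaluating \eqref{hn-rep} at the cutoff gives \( |E_n|\asymp\epsilon^{(n+1)/3}(1) \), which exceeds \( (1-x)^2(n+1)e^{-\sqrt{n+1}/\rho}=(n+1)\epsilon^{2(n+1)/3}(1)e^{-\sqrt{n+1}/\rho} \) by an unbounded factor. So for \( \alpha>0 \) the error term in \eqref{hn-asymp} cannot be kept with its \( (1-x)^2 \) prefactor uniformly up to \( 1-\delta_\alpha^{n+1} \); what the argument actually delivers on \( [0,1-\delta_\alpha^{n+1}] \) is \( |E_n|\lesssim (n+1)(1-x)^2\epsilon^{n+1}(1)+\delta_\alpha^{n+1} \). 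The paper's own final display for \( \alpha>0 \) asserts the \( (1-x)^2 \) prefactor without justifying it for the \( 2R \)-term, so it elides the same issue; a complete write-up must either weaken the error bound near \( x=1 \) in this way (which still suffices for the later lemmas after a more careful integral estimate there), or shrink the interval. Your \( \alpha<0 \) argument requires no such change.
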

\begin{proof}
It can be readily checked that the function \( |y+\sqrt{y^2-1}| \) is an increasing function of \( t \) for \( y=t \), \( t\in[1,\infty) \) and \( y=\pm\ic t \), \( t\in[0,\infty) \). Since \( \epsilon(1)=(1-|\alpha|)/(1+|\alpha|) \), it therefore holds that
\begin{eqnarray}
\max_{x\in[-1+(n+1)^{-1/2},1]}|\epsilon(x)|^n &=& \big|\epsilon(-1+(n+1)^{-1/2})\big|^n  \nonumber \\
\label{est-e-real}
&=& \left(1 - (n+1)^{-1/2}/\rho + \mathcal O\left((n+1)^{-1}\right)\right)^n \leq C_1e^{-\sqrt{n+1}/\rho}
\end{eqnarray}
for some absolute constant \( C_1>0 \). 

Assume that \( \alpha< 0 \). Then \( |S(x)|\geq r(x) \geq 2|\alpha|\rho \) for \( x\in[-1,1] \). Also, since \( |h(x)| \) is an increasing function on \( [-1,1] \), we have that \( |h(x)|\leq 1 \) for \( x\in[-1,1] \). Thus, we get from \eqref{hn-rep} and \eqref{est-e-real} that
\begin{eqnarray}
|h_{n+1}(x)-h(x)| &\leq& C_2(n+1)e^{-\sqrt{n+1}/\rho}\left((1-x)^2 + |R(x)|\right) \nonumber \\
\label{hnh1}
&\leq& C_3(1-x)^2 (n+1)e^{-\sqrt{n+1}/\rho}
\end{eqnarray}
for some absolute constants \( C_2,C_3 \), where one needs to observe that \( \epsilon(0)=0 \) and
\begin{equation}
\label{SR}
S(x)R(x) = \rho^2(1-x)^2.
\end{equation}
This proves the lemma in the case \( \alpha<0 \).

Suppose that \( \alpha >0 \). It is quite easy to see that estimate \eqref{hnh1} remains valid on \( [-1+(n+1)^{-1/2},0] \). Observe  also that \( \epsilon(x)>0 \) and is increasing for \( x\in(0,1] \), see \eqref{e-prime}, and \( 0< R(x) < 4\) on \( [-1,1] \). Then by using \eqref{SR} again, we get that
\begin{eqnarray*}
(1-\epsilon^{n+1}(x))(S(x)+R(x)\epsilon^{n+1}(x)) &\geq& S(x) - R(x)\epsilon^{2(n+1)}(x) \\
&\geq& (\rho^2/4)(1-x)^2 - 4\epsilon^{2(n+1)}(1)
\end{eqnarray*}
for \( x\in[0,1] \). Notice \( \delta_\alpha = \epsilon^{1/3}(1) \). Then
\[
(\rho^2/4)(1-x)^2 - 4\epsilon^{2(n+1)}(1) > (\rho^2/8)\delta_\alpha^{2(n+1)}
\]
 for \( x\in \big[0,1-\delta_\alpha^{(n+1)}\big] \) and \( n \) sufficiently large. Therefore, similarly to \eqref{hnh1}, it again follows from \eqref{SR} that there exists a constant \( C_4 \) such that
\[
|h_{n+1}(x)-h(x)| \leq C_4(1-x)^2(n+1) \big( \epsilon(1) / \delta_\alpha^2 \big)^{n+1} = C_4(1-x)^2(n+1)\epsilon^{2(n+1)/3}(1)
\]
 for \( x\in \big[0,1-\delta_\alpha^{(n+1)}\big] \). Since \( \epsilon(1)<1 \), the desired estimates follows.
\end{proof}

\section{Proof of Theorem~\ref{thm:1}}

To prove Theorem~\ref{thm:1} we shall use the following straightforward facts. If \( F(y) \) is analytic around the origin, then
\begin{equation}
\label{taylor}
F\left(\frac t{n+1}\right) = \sum_{p=0}^{N-1}\frac{F_pt^p}{(n+1)^p} + \frac{\widetilde F_N(t)t^N}{(n+1)^N}, \quad \big| \widetilde F_N(t)\big| \leq C_F^{N+1}, \quad 
\end{equation}
for \( t\in I_n:=\big[0,\sqrt{n+1}\big]\) and all \( n\geq n_F \), where \( F_p = F^{(p)}(0)/p! \), the last estimate follows from the extended Cauchy integral formula, and \( C_F \) is independent of \( n,N \). Further, if functions \( u(t),v(t) \) satisfy
\begin{equation} 
\label{product0}
g(t) = \sum_{p=0}^{N-1} \frac{B_p(g;t)}{(n+1)^p} + \frac{\widetilde B_N(g;t)}{(n+1)^N},
\end{equation}
with \( g\in\{u,v\} \), then so does their product and
\begin{equation} 
\label{product1}
B_p(uv;t) = \sum_{k=0}^p B_k(u;t)B_{p-k}(v;t)
\end{equation}
for \( p\leq N-1\), while
\begin{equation} 
\label{product2}
\widetilde B_N(uv;t) = \sum_{l=0}^N \frac1{(n+1)^l}\sum_{k+m=N+l,~k,m\leq N} B_{N,k}(u;t) B_{N,m}(v;t)
\end{equation}
with \( B_{N,k}(g;t) = B_k(g;t) \) for \( k<N \) and \( B_{N,N}(t) = \widetilde B_N(g;t) \). Finally, let \( F(y) \) be as in \eqref{taylor} and \( g(t) \) be as in \eqref{product0} with \( B_0(g;t)=0 \). Assume that the values of \( g(t) \) lie the domain of holomorphy of \( F(y) \) for all \( n\geq n_g \). Then
\begin{equation}
\label{taylor1}
F(g(t)) = F(0) + \sum_{p=1}^{N-1} \frac{B_p(F\circ g;t)}{(n+1)^p} + \frac{\widetilde B_N(F\circ g;t)}{(n+1)^N},
\end{equation}
with
\begin{equation}
\label{taylor2}
B_p(F\circ g;t) = \sum \frac{F^{(m)}(0)}{m_1!\cdots m_{N-1}!}\prod_{k=1}^{N-1}B_k^{m_k}(g;t)
\end{equation}
where \( m=m_1+\cdots + m_{N-1} \) and the sum is taken over all partitions \( p = \sum_{i=1}^{N-1} i m_i \), \( m_i\geq0 \), and
\begin{equation}
\label{taylor3}
\widetilde B_N(F\circ g;t) = \sum_{l=0}^{N(N-1)} \frac1{(n+1)^l}\sum \frac{F^{(m)}(0)}{m_1!\cdots m_N!}\prod_{k=1}^N B_{N,k}^{m_k}(g;t)
\end{equation}
where \( m=m_1+\cdots + m_N \), the inner sum is taken over all partitions \( l+N = \sum_{i=1}^N i m_i \), \( m_i\geq0 \), and \( B_{N,k}(g;t) \) has the same meaning as in \eqref{product2}.

\begin{lemma}
\label{lem:4}
Let \( t\in I_n =\big[0,\sqrt{n+1}\big] \). Then it holds for all \( N\geq1 \) that
\begin{equation}
\label{e-11}
r\left(-1+\frac t{n+1}\right) = 2\rho\left(\sum_{p=0}^{N-1}\frac{r_p t^p}{(n+1)^p} + \frac{\tilde r_N(t) t^N}{(n+1)^N}\right)
\end{equation}
for some constants \( r_p \) and functions \( \tilde r_N(t) \) that obey estimate in \eqref{taylor}. In particular, \( r_0=1 \) , \( r_1=-1/2 \), \( r_2 = (1-\rho^2)/(8\rho^2) \). Moreover, for \( \epsilon(z) \), defined in Lemma~\ref{lem:1}, it holds that
\begin{equation}
\label{e-en}
\epsilon^{n+1}\left(-1+\frac t{n+1}\right) = (-1)^{n+1}e^{-t/\rho}\left( 1 + \sum_{p=1}^{N-1} \frac{t^{p+1}e_p(t)}{(n+1)^p} + \frac{t^{N+1}\tilde e_N(t)}{(n+1)^N} \right),
\end{equation}
where \( e_p(t) \) is a polynomial of degree \( p-1 \) independent of \( n,N \), in particular, \( e_1(t)\equiv -1/(2\rho) \), and \( |\tilde e_N(t)| \) is bounded above on \( I_n \) by a polynomial of degree \( N-1 \) whose coefficients depend only on \( N \).
\end{lemma}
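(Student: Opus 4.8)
The plan is to reduce both expansions to the abstract composition machinery recorded in \eqref{taylor}--\eqref{taylor3} by exhibiting \( r \) and the exponent of \( \epsilon^{n+1} \) as an explicit holomorphic function of the single small variable \( u := t/(n+1) \), which tends to \( 0 \) uniformly on \( I_n \) because \( u\le (n+1)^{-1/2} \) there. For \eqref{e-11} I would first substitute \( z=-1+u \) into \eqref{r}. A direct computation gives \( (z-1)^2+4\alpha^2 z = 4\rho^2 - 4\rho^2 u + u^2 \), so that on the disk side \( r(-1+u) = 2\rho\, F(u) \) with \( F(u) := \sqrt{1 - u + u^2/(4\rho^2)} \) and \( F(0)=1 \), which fixes the branch and matches the requirement \( r_0=1 \). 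Since \( F \) is holomorphic near the origin (its radicand equals \( 1 \) at \( u=0 \)) and \( u=t/(n+1) \) lands in any prescribed neighborhood of \( 0 \) for \( t\in I_n \) and \( n \) large, \eqref{taylor} applies verbatim and yields \eqref{e-11} with \( r_p = F^{(p)}(0)/p! \) and \( \tilde r_N = \tilde F_N \). Expanding \( \sqrt{1+w} \) with \( w=-u+u^2/(4\rho^2) \) gives \( F(u)=1-\tfrac12 u+\tfrac{1-\rho^2}{8\rho^2}u^2+\cdots \), whence \( r_0=1 \), \( r_1=-1/2 \), \( r_2=(1-\rho^2)/(8\rho^2) \).

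For \eqref{e-en} the starting point is the expression for \( \epsilon \) from Lemma~\ref{lem:1}. Substituting \( z=-1+u \) and using \( r(-1+u)=2\rho F(u) \) gives
\[
\epsilon(-1+u) = \frac{u-2\rho F(u)}{u+2\rho F(u)} = -\frac{1-v}{1+v}, \qquad v := \frac{u}{2\rho F(u)},
\]
where \( v \) is holomorphic near \( 0 \) with \( v(0)=0 \); the sign of \( r \) is the one making \( v>0 \), consistent with \( |\epsilon(-1+u)|<1 \) on \( \D \) as in \eqref{est-e}. Raising to the power \( n+1 \) produces the factor \( (-1)^{n+1} \) times \( \exp\!\big(-2(n+1)\operatorname{arctanh} v\big) \). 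The key observation is that this exponent factors through \( u \): since \( \operatorname{arctanh} v \) vanishes at \( u=0 \), the function \( L(u) := -2\,\operatorname{arctanh}\big(v(u)\big)/u \) extends holomorphically across the origin with \( L(0)=-1/\rho \), and because \( (n+1)u=t \),
\[
-2(n+1)\operatorname{arctanh} v = t\,L\!\left(\frac{t}{n+1}\right).
\]
Applying \eqref{taylor} to \( L \) and multiplying by \( t \) writes this exponent as \( -t/\rho + g(t) \), where \( g \) has the form \eqref{product0} with \( B_0(g;t)=0 \) and \( B_p(g;t)=L_p\,t^{p+1} \), \( L_p := L^{(p)}(0)/p! \); in particular \( L_0=-1/\rho \) accounts for the leading factor \( e^{-t/\rho} \).

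It then remains to exponentiate \( g \). Because \( B_0(g;t)=0 \), the composition rule \eqref{taylor1}--\eqref{taylor3} with \( F=\exp \) applies and produces exactly the bracketed series in \eqref{e-en}. The degree count is what yields the stated shape: each coefficient \( B_p(g;t)=L_p t^{p+1} \) is a monomial of degree \( p+1 \), so by \eqref{taylor2} every partition \( p=\sum i m_i \) contributes a monomial of degree \( \sum(i+1)m_i = p+m \) with \( 1\le m\le p \); hence \( B_p(\exp\circ g;t) = t^{p+1}e_p(t) \) with \( e_p \) a polynomial of degree \( p-1 \) (the extreme partition \( m_1=p \) supplies the nonzero top coefficient \( L_1^p/p! \)), and for \( p=1 \) only \( m_1=1 \) occurs, giving \( e_1\equiv L_1=-1/(2\rho) \).

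The main technical point, and the step I expect to be most delicate, is the remainder estimate. Feeding the bound \( |\tilde L_N(t)|\le C_L^{N+1} \) from \eqref{taylor} into \eqref{taylor3} and repeating the same degree bookkeeping --- now with \( B_{N,N} \) contributing the factor \( \tilde L_N\,t^{N+1} \) rather than a monomial --- one checks that a factor \( t^{N+1} \) pulls out of the order-\( N \) remainder and that the residual \( |\tilde e_N(t)| \) is dominated on \( I_n \) by a polynomial of degree \( N-1 \) whose coefficients depend only on \( N \). This is where the uniformity over \( I_n \) and the polynomial growth of the remainder must be tracked carefully, but it is a mechanical consequence of the estimates \eqref{product2} and \eqref{taylor3} once the monomial structure of the \( B_k(g;t) \) is in hand.
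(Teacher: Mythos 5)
Your proof is correct and takes essentially the same route as the paper's: both reduce \( r(-1+u) \) to \( 2\rho\sqrt{1-u+u^2/(4\rho^2)} \) and invoke \eqref{taylor}, and both expand the logarithm of \( -\epsilon \) (your \( -2\,\mathrm{arctanh}(v)=uL(u) \) is precisely the paper's expansion of \( \log(-\epsilon) \) with \( \epsilon_{p+1}=L_p \)), factor out \( (-1)^{n+1}e^{-t/\rho} \), and exponentiate via \eqref{taylor1}--\eqref{taylor3}. The degree bookkeeping you carry out for \( e_p(t) \) (using \( m\le p \) in \eqref{taylor2}) and for \( \tilde e_N(t) \) (using \( m\le l+N \) and \( t^{2l}\le(n+1)^l \) on \( I_n \) in \eqref{taylor3}) is exactly the paper's argument as well.
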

\begin{proof}
Observe that for \( y>0 \) it follows from \eqref{r} and the choice of the branch of \( r(z) \) that
\[
r(-1+y) = 2\rho\sqrt{1-y+y^2/(4\rho^2)},
\]
where the root in right-hand side of the above equality is principal. Since the right-hand side above is analytic around the origin, expansion \eqref{e-11} follows from \eqref{taylor}. An absolutely analogous argument yields the expansion
\[
\log \left(-\epsilon\left(-1+\frac t{n+1}\right)\right) = \sum_{p=1}^N\frac{\epsilon_p t^p}{(n+1)^p} + \frac{\tilde \epsilon_{N+1}(t)t^{N+1}}{(n+1)^{N+1}}, \quad \epsilon_1 = -\frac1\rho,~~\epsilon_2 = -\frac1{2\rho},
\]
where \( |\tilde \epsilon_{N+1}(t)| \) has an upper bound as in \eqref{taylor}. Since we can write
\[
\epsilon^{n+1}\left(-1+\frac t{n+1}\right) = \frac{(-1)^{n+1}}{e^{t/\rho}} \exp\left\{(n+1)\left(\log \left(-\epsilon\left(-1+\frac t{n+1}\right)\right) + \frac1\rho \frac t{n+1}\right)\right\},
\]
it follows from \eqref{taylor1}--\eqref{taylor3} that \eqref{e-en} holds, where \( e_p(t) \) is a polynomial of degree \( p-1 \) independent of \( n,N \) (notice that always \( m\leq p \) in \eqref{taylor2}) and \( |\tilde e_N(t)| \) is bounded above on \( I_n \) by a polynomial of degree \( N-1 \) whose coefficients depend only on \( N \) (again, we use that \( m\leq l+N \) in \eqref{taylor3} and that \( t^{2l}\leq (n+1)^l \) on \( I_n \)).
\end{proof}

\begin{lemma}
\label{lem:5}
Set \( \gamma(s) := 2s/(e^s-e^{-s}) \) and let \( x=-1+t/(n+1) \), \( t\in I_n \). It holds that
\begin{equation}
\label{hn-Gam}
h_{n+1}(x) = h(x) - (-1)^{n+1}\frac{(1-x)^2}4\gamma(t/\rho)(1+ \Gamma_{n+1}(t))
\end{equation}
with \( \Gamma_{n+1}(t) \) having an expansion of the form
\begin{equation}
\label{Gam}
\Gamma_{n+1}(t) = \sum_{p=1}^{N-1}\frac{H_p(t)}{(n+1)^p} + \frac{\widetilde H_N(t)}{(n+1)^N},
\end{equation}
for any \( N\geq2 \), where \( H_1(t)=t-(-1)^{n+1}(\alpha/2\rho)t + \mathcal O(t^2) \), \( H_p(t)= \mathcal O(t^2) \), \( p\geq2\), and \( \widetilde H_N(t)=\mathcal O(t^2)\) as \( t\to0 \), \( |H_p(t)| \) is bounded above by a polynomial of degree \( 2p \) independent of \( n,N \), while \( |\widetilde H_N(t)| \) is bounded above on \( I_n \) by a polynomial of degree \( 2N \) whose coefficients depend on \( N \) but not on \( n \). 
\end{lemma}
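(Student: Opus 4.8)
The plan is to insert the expansions of Lemma~\ref{lem:4} into the exact representation \eqref{hn-rep} of Lemma~\ref{lem:2} and read off \eqref{hn-Gam}. First I would set \( \mathcal E := \epsilon^{n+1}(x) \), use \( 1+x = t/(n+1) \) and the consequent identity \( h(x) = -\alpha t/\big((n+1)r(x)\big) \) to cancel the explicit factor \( (n+1)/\alpha \) multiplying \( (1-x)^2 r(x)/x \) in \eqref{hn-rep}. Writing the denominator there as \( (1-\mathcal E)(S+R\mathcal E) = r(x)(1-\mathcal E^2) - \alpha(1+x)(1-\mathcal E)^2 \) (which follows from \( S=r-\alpha(1+x) \), \( R=r+\alpha(1+x) \)) turns \( h_{n+1}(x)-h(x) \) into
\[
\mathcal E\, t\,\frac{(1-x)^2}{x}\left(1 + \frac{2\alpha R(x)(1-\mathcal E)\,x}{(n+1)r(x)(1-x)^2}\right)\Big/\Big(r(x)(1-\mathcal E^2)-\tfrac{\alpha t}{n+1}(1-\mathcal E)^2\Big),
\]
a ratio whose every ingredient is controlled by Lemma~\ref{lem:4}.

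Next I would extract the advertised prefactor. Writing \( \mathcal E = (-1)^{n+1}\tilde{\mathcal E} \) with \( \tilde{\mathcal E} = e^{-t/\rho}\mathcal D \) as in \eqref{e-en} (so that \( \mathcal D\to1 \) and \( \mathcal E^2=\tilde{\mathcal E}^2 \), i.e.\ the parity sign survives only in the odd powers of \( \mathcal E \)), and abbreviating \( \sigma := e^{t/\rho}-e^{-t/\rho} \) so that \( \gamma(t/\rho)=2t/(\rho\sigma) \), I would divide the displayed quantity by \( -(-1)^{n+1}\frac{(1-x)^2}{4}\gamma(t/\rho) \) and call the result \( 1+\Gamma_{n+1}(t) \). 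The \( t \) and \( (1-x)^2 \) cancel, leaving
\[
1+\Gamma_{n+1}(t) = \frac{-2\rho\,\sigma\,\tilde{\mathcal E}\,\big(1+c_n(t)\big)}{x\big(r(x)(1-\tilde{\mathcal E}^2)-\tfrac{\alpha t}{n+1}(1-\mathcal E)^2\big)},\qquad c_n(t):=\frac{2\alpha R(x)(1-\mathcal E)x}{(n+1)r(x)(1-x)^2}.
\]
The key identity is \( \tilde{\mathcal E}\,\sigma = (1-e^{-2t/\rho})\mathcal D \); since as \( n\to\infty \) (with \( t \) fixed) one has \( x\to-1 \), \( r(x)\to2\rho \), \( \tilde{\mathcal E}\to e^{-t/\rho} \) and \( c_n\to0 \), the right-hand side tends to \( -2\rho(1-e^{-2t/\rho})/\big((-1)\,2\rho(1-e^{-2t/\rho})\big)=1 \), which both confirms that the factored term is correct and that \( \Gamma_{n+1}(t)\to0 \).

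To obtain the expansion \eqref{Gam} I would invoke the calculus of \eqref{taylor}--\eqref{taylor3}. Each constituent---\( r(x) \) via \eqref{e-11}; \( \mathcal E,\tilde{\mathcal E},\mathcal E^2,1-\mathcal E \) via \eqref{e-en}; and \( 1/x=-(1-t/(n+1))^{-1} \), \( (1-x)^2 \), \( R(x) \), which are analytic in \( t/(n+1) \) and so fall under \eqref{taylor}---possesses a \( 1/(n+1) \)-expansion whose coefficients are polynomials in \( t \) of controlled degree. Applying the product rule \eqref{product1}--\eqref{product2} and the composition rule \eqref{taylor1}--\eqref{taylor3} (the latter to invert the denominator) produces \eqref{Gam}; the growth bounds---\( |H_p(t)| \) dominated by a polynomial of degree \( 2p \) and \( |\widetilde H_N(t)| \) by one of degree \( 2N \) on \( I_n \)---come from tracking degrees through \eqref{product1} and \eqref{taylor2}--\eqref{taylor3}, reusing \( t^{2l}\leq(n+1)^l \) on \( I_n \) exactly as in Lemma~\ref{lem:4}. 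The explicit \( H_1(t)=t-(-1)^{n+1}(\alpha/2\rho)t+\mathcal O(t^2) \) is then a direct first-order computation, the parity-dependent term originating from the single unsquared factor \( 1-\mathcal E = 1-(-1)^{n+1}\tilde{\mathcal E} \) appearing in \( c_n \) and in \( (1-\mathcal E)^2 \).

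I expect the main obstacle to be the regularity of the coefficients at \( t=0 \): both \( \sigma \) and the denominator \( r(x)(1-\tilde{\mathcal E}^2)-\frac{\alpha t}{n+1}(1-\mathcal E)^2 \) vanish like \( t \) there, so the ratio is genuinely \( 0/0 \) and the \( t \)- and \( 1/(n+1) \)-limits do not commute naively. The assertions \( H_p(t)=\mathcal O(t^2) \) for \( p\geq2 \) and \( \widetilde H_N(t)=\mathcal O(t^2) \) encode precisely the cancellations that must survive after dividing by the \( \mathcal O(t) \)-vanishing leading denominator; what makes them hold is that in \eqref{e-en} the \( p \)-th correction of \( \mathcal E \) carries the prefactor \( t^{p+1} \), so every correction supplies enough powers of \( t \) to offset the \( 1/t \) produced by the vanishing denominator. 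Carefully propagating this through the quotient, rather than appealing to any single algebraic identity, will be the delicate part.
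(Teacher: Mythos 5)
Your algebra is sound and your route is essentially the paper's: both insert Lemma~\ref{lem:4} into \eqref{hn-rep}, factor out \( -(-1)^{n+1}\tfrac{(1-x)^2}{4}\gamma(t/\rho) \), and run the expansion calculus \eqref{taylor}--\eqref{taylor3} (your identity \( (1-\mathcal E)(S+R\mathcal E)=r(1-\mathcal E^2)-\alpha(1+x)(1-\mathcal E)^2 \) is just the paper's \eqref{5:0} rearranged). This yields \eqref{hn-Gam}, the existence of the expansion \eqref{Gam}, and the polynomial growth bounds. The genuine gap lies in your justification of the vanishing statements at \( t=0 \), which you correctly flag as the delicate point but then resolve with a mechanism that does not work.

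Your claim that ``every correction supplies enough powers of \( t \)'' because the \( p \)-th correction in \eqref{e-en} carries the prefactor \( t^{p+1} \) accounts only for the corrections coming from the factor \( \mathcal D \) in \( \mathcal E=(-1)^{n+1}e^{-t/\rho}\mathcal D \). It fails for the terms involving \( 1-\mathcal E \): since \( 1-\mathcal E\to 1-(-1)^{n+1}e^{-t/\rho} \), this factor equals \( 2 \) (not \( 0 \)) at \( t=0 \) when \( n \) is even. Concretely, the order-\( 1/(n+1) \) coefficient of your \( c_n(t) \) is \( -\tfrac{\alpha}{2}\bigl(1-(-1)^{n+1}e^{-t/\rho}\bigr) \), equal to \( -\alpha \) at \( t=0 \) for even \( n \), while the coefficient produced (after inversion) by the term \( \tfrac{\alpha t}{n+1}(1-\mathcal E)^2 \) in your denominator equals \( +\alpha \) there; \( H_1(0)=0 \) holds only because these two cancel, and analogous cancellations, involving powers of the idempotent factor \( (1-(-1)^{n+1})/2 \), are needed at every order. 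The paper verifies them for the coefficients \( H_p \), \( p\geq2 \), by explicitly computing the constant and linear terms of the constituent expansions, namely \eqref{5:3}, \eqref{5:4}, \eqref{5:5}, and multiplying out via \eqref{product1}. More seriously, for the remainder \( \widetilde H_N(t) \) no such explicit computation is available, and no power counting can give \( \widetilde H_N(t)=\mathcal O(t^2) \); the paper's proof of this point --- which is precisely what Lemma~\ref{lem:6} needs, since there one divides by \( f^2(t/\rho)\sim t^2/(3\rho^2) \) --- is a symmetry argument entirely absent from your proposal: since \( h_{n+1}(1/x)=h_{n+1}(x) \), one gets \( h_{n+1}'(-1)=0 \), so \( h_{n+1}(x)+(-1)^{n+1} \) is divisible by \( (1+x)^2 \), forcing the full remainder to vanish to second order at \( t=0 \). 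Without this idea (or a substitute for it), your proof of the \( \mathcal O(t^2) \) assertions is incomplete. A minor additional point: the intermediate expansion of the numerator requires \( N\geq3 \), and the case \( N=2 \) must be recovered afterwards, as the paper does via \( \widetilde H_2=H_2+\widetilde H_3(n+1)^{-1} \).
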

\begin{proof}
Recall \eqref{hn-rep}.  Notice that
\begin{equation}
\label{5:0}
\big(1-\epsilon^{n+1}(x)\big)\big(S(x) + R(x)\epsilon^{n+1}(x)\big) = S(x) + 2\alpha(x+1)\epsilon^{n+1}(x) - R(x)\epsilon^{2(n+1)}(x).
\end{equation}
It follows from \eqref{e-11} that  \( S(x) \) and \( R(x) \) have expansions as in \eqref{product0} with
\[
B_p(S;t) = B_p(R;t) = 2\rho r_pt^p, ~~~p\neq1, \quad B_1(S;t) = -(\alpha+\rho)t,~~~B_1(R;t) = (\alpha-\rho)t,
\]
and \( \widetilde B_N(S;t) = \widetilde B_N(R;t) = 2\rho\tilde r_N(t) \) for any \( N\geq2 \). Therefore, we get from \eqref{product1}--\eqref{product2} and \eqref{e-en} that
\[
R(x)\epsilon^{2(n+1)}(x) = 2\rho e^{-2t/\rho}\left(1 + \sum_{p=1}^{N-1}\frac{C_p(t)t^p}{(n+1)^p} + \frac{\widetilde C_N(t)t^N}{(n+1)^N}\right)
\]
for any \( N\geq2 \), where \( C_1(t) =(\alpha-\rho-2t)/(2\rho) \), \( C_p(t) = r_p + tq_p(t) \) for some polynomial \( q_p(t) \) of degree \( p-1 \) when \( p\geq2 \), and \( |\widetilde C_N(t)| \) is bounded above on \( I_n \) by a polynomial of degree \( N \) independent of \( n \). Consequently, we get that the expression in \eqref{5:0} has an expansion
\[
2\rho\big(1 - e^{-2t/\rho}\big)\left(1 + \frac{D_1(t)}{n+1} + \sum_{p=2}^{N-1}\frac{D_p(t)t^p}{(n+1)^p} + \frac{\widetilde D_N(t)t^N}{(n+1)^N} \right)
\]
for all \( N\geq2 \), where
\begin{eqnarray*}
D_1(t) &=& -\alpha\left(\frac{1-(-1)^{n+1}e^{-t/\rho}}2\right)^2\frac{2t/\rho}{1-e^{-2t/\rho}} + \frac t2\left(\frac{2t/\rho}{e^{2t/\rho}-1} -1 \right) \\
&=& -\alpha\frac{1-(-1)^{n+1}}2 + \mathcal O\left(t^2\right) \quad \text{as} \quad t\to0,
\end{eqnarray*}
with \( |D_1(t)| \) bounded above by a linear function independent of \( n,N \), and
\[
D_p(t) = r_p + \gamma(t/\rho)\left((-1)^{n+1}\alpha e_{p-1}(t) - \rho e^{-t/\rho}q_p(t)\right)/2
\]
for all \( p\geq2 \), with \( |D_p(t)| \) being bounded above on \( [0,\infty) \), and \( |\widetilde D_N(t)| \) that is bounded on \( I_n \) by a constant that depends on \( N \) but not on \( n \). In particular, we have that
\[
\left|\frac{D_1(t)}{n+1} + \sum_{p=2}^{N-1}\frac{D_p(t)t^p}{(n+1)^p} + \frac{\widetilde D_N(t)t^N}{(n+1)^N} \right| = \left|\frac{D_1(t)}{n+1} + \frac{\widetilde D_2(t)t^2}{(n+1)^2} \right| < \frac{c_N}{\sqrt{n+1}}< 1
\]
for \( t\in I_n \) and all \( n\geq n_N \), where \( c_N,n_N \) are constants dependent only on \( N \). Thus, it follows from \eqref{taylor1}--\eqref{taylor3} with \( F(y) = 1/(1+y) \) that the reciprocal of \eqref{5:0} has an expansion
\begin{equation}
\label{5:1}
\frac1{2\rho}\frac1{1-e^{-2t/\rho}}\left(1 + \sum_{p=1}^{N-1}\frac{E_p(t)}{(n+1)^p} + \frac{\widetilde E_N(t)}{(n+1)^N} \right),
\end{equation}
for all \( N\geq2 \), where \( E_1(t) = -D_1(t) \) and more generally
\begin{equation}
\label{5:3}
E_p(t) = (-1)^p D_1^p(t) + \mathcal O\left(t^2\right) = \alpha^p\frac{1-(-1)^{n+1}}2 + \mathcal O\left(t^2\right)
\end{equation}
as \( t\to0 \) with \( |E_p(t)| \) bounded above by a polynomial of degree \( p \) independent of \( n,N \), while \( |\widetilde E_N(t)| \) is bounded above on \( I_n \) by a polynomial of degree \( N \) whose coefficients depend on \( N \) but not on \( n \). Furthermore, observe that
\begin{multline*}
-h(x)\epsilon^{n+1}(x)\left(\frac{n+1}\alpha\frac{(1-x)^2}x r(x) + 2R(x)\big(1-\epsilon^{n+1}(x)\big)\right) = \\ -(n+1)(1+x)(1-x)^2\epsilon^{n+1}(x)\left(-\frac1x - \frac{2\alpha}{n+1} \frac{R(x)}{r(x)}\frac{1-\epsilon^{n+1}(x)}{(1-x)^2}\right).
\end{multline*}
It follows from an argument similar to the one given in the first part of the lemma that the above expression has an expansion of the form
\begin{equation}
\label{5:2}
-(1-x)^2(-1)^{n+1}te^{-t/\rho}\left(1+ \sum_{p=1}^{N-1}\frac{G_p(t)t^{p-1}}{(n+1)^p} + \frac{\widetilde G_N(t)t^{N-1}}{(n+1)^N}\right),
\end{equation}
for any \( N\geq 3 \), where
\begin{equation}
\label{5:4}
G_1(t) = -\alpha \frac{1-(-1)^{n+1}}2 + \left(1- (-1)^{n+1}\frac{\alpha}{2\rho}\right)t + \mathcal O\left(t^2\right)
\end{equation}
and
\begin{equation}
\label{5:5}
G_2(t) = -\alpha\frac{1-(-1)^{n+1}}2\left(1+\frac{\alpha}{2\rho}\right) + \mathcal O(t)
\end{equation}
as \( t\to0 \), \( |G_p(t)| \) is bounded above by a polynomial of degree \( p+1 \) independent of \( n,N \), while \( |\widetilde G_N(t)| \) is bounded above on \( I_n \) by a polynomial of degree \( N+1 \) whose coefficients depend on \( N \) but not on \( n \). We now get from \eqref{hn-rep}, \eqref{5:1}, and \eqref{5:2}, that \eqref{hn-Gam} and \eqref{Gam} do hold for \( N\geq3 \) and functions \( H_p(t) \) and \( \widetilde H_N(t) \) that can be computed via \eqref{product1}--\eqref{product2} and whose moduli satisfy the described bounds. The vanishing of \( H_p(t) \) as \( t\to0 \) can be verified by using \eqref{product1}, \eqref{5:3}, \eqref{5:4}, and \eqref{5:5}. To see that \( \widetilde H_N(t)=\mathcal O(t^2) \), observe that
\[
h_{n+1}(x) = -(-1)^{n+1} - (-1)^{n+1}(1-t/(n+1))\widetilde H_N(t)(n+1)^{-N} + \mathcal O\left(t^2\right)
\] 
by what precedes. Thus, we need to show that \( h_{n+1}(x) + (-1)^{n+1} \) is divisible by \( (1+x)^2 \) (of course, if this were not true, formula \eqref{Enalpha} would not have made sense). Since \( h_{n+1}(-1)=-(-1)^{n+1} \), it must hold that \( h_{n+1}^\prime(-1) =0 \). As was mentioned before \eqref{Enalpha}, \( h_{n+1}(x) = h_{n+1}(1/x) \) and therefore \( x^2 h_{n+1}^\prime(x) = - h_{n+1}^\prime(1/x) \), which yields the desired claim. Finally, since \( \widetilde H_2(t) = H_2(t) + \widetilde H_3(t)(n+1)^{-1} \), we can take \( N=2 \) in \eqref{Gam} as well.
\end{proof}

\begin{lemma}
\label{lem:6}
let \( x=-1+t/(n+1) \), \( t\in I_n \). It holds that
\begin{equation}
\label{hn-K}
\frac{\sqrt{1-h_{n+1}^2(x)}}{1-x} = \frac{\rho f(t/\rho)}{r(x)}\left(1+\gamma(t/\rho)\sum_{p=1}^{N-1}\frac{K_p(t)}{(n+1)^p} + \gamma(t/\rho) \frac{\widetilde K_N(t)}{(n+1)^N}\right)
\end{equation}
for any \( N\geq 2 \), where \( |K_p(t)| \) is bounded above by a polynomial of degree \( 2p \) independent of \( n,N \) while \( |\widetilde K_N(t)| \) is bounded above on \( I_n \) by a polynomial of degree \( 2N \) whose coefficients depend on \( N \) but not on \( n \). 
\end{lemma}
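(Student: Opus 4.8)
The starting point is the exact algebraic identity
\[
1-h(x)^2=\frac{\rho^2(1-x)^2}{r(x)^2},
\]
obtained by inserting \(h(x)=-\alpha(x+1)/r(x)\) and \(r(x)^2=(x-1)^2+4\alpha^2x\) and checking that \(r(x)^2-\alpha^2(1+x)^2=\rho^2(1-x)^2\) (using \(1-\alpha^2=\rho^2\)). Writing \(\mu:=(1-x)^2/4\) and \(w:=r(x)^2/(4\rho^2)\), this reads \(1-h^2=\mu/w\). I then substitute the representation \eqref{hn-Gam} of Lemma~\ref{lem:5}, square it, and use \(1-h^2=\mu/w\) to factor the result as
\[
1-h_{n+1}^2(x)=\frac{\mu}{w}\big(1-\gamma(t/\rho)^2\big)\big(1+\Lambda_{n+1}(t)\big),
\]
where \(\Lambda_{n+1}=\gamma(t/\rho)\,\Xi/(1-\gamma(t/\rho)^2)\) and \(\Xi=\gamma(t/\rho)\big[1-\mu w(1+\Gamma_{n+1})^2\big]+2(-1)^{n+1}h\,w\,(1+\Gamma_{n+1})\). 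Taking square roots on \((-1,1)\), where \(1-x>0\), \(r(x)>0\) and \(\gamma(t/\rho)<1\), and using the simplifications \(\sqrt{\mu/w}/(1-x)=\rho/r(x)\) and \(\sqrt{1-\gamma(t/\rho)^2}=f(t/\rho)\), reduces \eqref{hn-K} to proving that \(\sqrt{1+\Lambda_{n+1}(t)}=1+\gamma(t/\rho)\,\Theta_{n+1}(t)\) with \(\Theta_{n+1}(t)=\sum_{p=1}^{N-1}K_p(t)/(n+1)^p+\widetilde K_N(t)/(n+1)^N\).

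To expand \(\Lambda_{n+1}\) I first expand \(\Xi\) in powers of \(1/(n+1)\): the ingredients \(\mu\), \(w\), \(h=-\alpha(x+1)/r(x)\) and \(\Gamma_{n+1}\) all carry expansions of the form \eqref{product0} by Lemma~\ref{lem:4} and Lemma~\ref{lem:5}, and the product rule \eqref{product1}--\eqref{product2} assembles \(\Xi\) with coefficients bounded by polynomials of degree \(2p\). The delicate point, and the main obstacle, is the prefactor \(1/(1-\gamma(t/\rho)^2)=1/f(t/\rho)^2\), which blows up like \(t^{-2}\) as \(t\to0\): to keep the coefficients of \(\Lambda_{n+1}\) polynomially bounded I must show that every expansion coefficient of \(\Xi\) vanishes to order \(t^2\) at the origin. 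This second-order vanishing is exactly the cancellation of the linear-in-\(t\) contributions of \(h(x)\) and of the linear term of \(H_1(t)\) recorded in Lemma~\ref{lem:5}, and is a manifestation of the divisibility of \(1-h_{n+1}^2(x)\) by \((1+x)^2\) established there. Since this \(\mathcal O(t^2)\) vanishing compensates the \(t^{-2}\) growth of \(1/(1-\gamma(t/\rho)^2)\) near \(t=0\), while for \(t\) bounded away from \(0\) this prefactor stays bounded (it tends to \(1\) as \(t\to\infty\)), the coefficients of \(\Lambda_{n+1}/\gamma(t/\rho)=\Xi/(1-\gamma(t/\rho)^2)\) are bounded by polynomials of degree \(2p\).

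Finally, because \(\Lambda_{n+1}=\mathcal O(1/(n+1))\) has vanishing \((n+1)^0\)-coefficient and \(1+\Lambda_{n+1}=w\,(1-h_{n+1}^2)/(\mu\,f(t/\rho)^2)\) is positive, hence lies in the domain of holomorphy of \(y\mapsto\sqrt{1+y}\), the composition rule \eqref{taylor1}--\eqref{taylor3} produces an expansion of \(1+\sqrt{1+\Lambda_{n+1}}\) with coefficients bounded by polynomials of degree \(2p\) and with \((n+1)^0\)-coefficient equal to \(2\). I then pull the explicit factor out by writing
\[
\sqrt{1+\Lambda_{n+1}}-1=\gamma(t/\rho)\,\frac{\Xi/(1-\gamma(t/\rho)^2)}{1+\sqrt{1+\Lambda_{n+1}}},
\]
and expand the right-hand fraction with the product rule \eqref{product1}--\eqref{product2}, using \eqref{taylor1}--\eqref{taylor3} once more for the reciprocal of \(1+\sqrt{1+\Lambda_{n+1}}\); the numerator \(\Xi/(1-\gamma(t/\rho)^2)\) is polynomially bounded of degree \(2p\) by the previous paragraph, and the denominator is bounded below by \(1\). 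The degree count in \eqref{taylor2}--\eqref{taylor3} then yields \(\Theta_{n+1}(t)=\sum_{p=1}^{N-1}K_p(t)/(n+1)^p+\widetilde K_N(t)/(n+1)^N\) with \(|K_p|\) bounded by a polynomial of degree \(2p\) and \(|\widetilde K_N|\) by one of degree \(2N\), which is precisely \eqref{hn-K}. The only genuinely nonroutine step is the regularization of the \(t^{-2}\) singularity of \(1/f(t/\rho)^2\) carried out in the second paragraph.
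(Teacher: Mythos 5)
Your proposal is correct and follows essentially the same route as the paper's proof: the same identity \( 1-h^2(x)=\rho^2(1-x)^2/r^2(x) \), the same squaring of \eqref{hn-Gam} and factoring out \( f^2=1-\gamma^2 \), the same key observation that the \( \mathcal O(t^2) \) vanishing of the expansion coefficients (forced by the cancellations recorded in Lemma~\ref{lem:5}) neutralizes the \( t^{-2} \) singularity of \( 1/f^2(t/\rho) \) at the origin, and the same final application of \eqref{taylor1}--\eqref{taylor3} with \( F(y)=\sqrt{1+y} \). The only cosmetic difference is how the overall factor \( \gamma(t/\rho) \) is extracted at the end: you use the identity \( \sqrt{1+\Lambda}-1=\Lambda/\bigl(1+\sqrt{1+\Lambda}\bigr) \), whereas the paper pulls \( \gamma \) directly out of every coefficient of the composition, since each one carries at least one power of \( \gamma\le1 \).
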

\begin{proof}
Observe that \( 1-h^2(x) = \rho^2(1-x)^2r^{-2}(x) \). Then it follows from \eqref{hn-Gam} that
\begin{multline*}
\frac{1-h_{n+1}^2(x)}{1-h^2(x)} = 1 +(-1)^{n+1}\gamma(t/\rho)\big(1+\Gamma_{n+1}(t)\big)h(x)\frac{r^2(x)}{2\rho^2} - \\ \gamma(t/\rho)^2\big(1+\Gamma_{n+1}(t)\big)^2\frac{(1-x)^2}{4}\frac{r^2(x)}{4\rho^2}.
\end{multline*}
Since \( h(x)r(x)=-\alpha(1+x) \), expansions \eqref{e-11}, \eqref{Gam} and formulae \eqref{product1}--\eqref{product2} yield that
\[
(-1)^{n+1}\big(1+\Gamma_{n+1}(t)\big)h(x)\frac{r^2(x)}{2\rho^2} = \sum_{p=1}^{N-1}\frac{H_p^*(t)}{(n+1)^p} + \frac{\widetilde H_N^*(t)}{(n+1)^N},
\]
for any \( N\geq2 \), where \( H_1^*(t)=-(-1)^{n+1}(\alpha/\rho)t \), \( H_p^*(t)= \mathcal O(t^2) \), \( p\geq2\), and \( \widetilde H_N^*(t)=\mathcal O(t^2)\) as \( t\to0 \), while \( |H_p^*(t)| \) and \( |\widetilde H_N^*(t)| \) have similar bounds to \( |H_p(t)| \) and \( |\widetilde H_N(t)| \). Furthermore, it clearly holds that
\[
\frac{(1-x)^2}4 = 1- \frac t{n+1} + \frac14\frac{t^2}{(n+1)^2} \quad \text{and} \quad \frac{r^2(x)}{4\rho^2} = 1 - \frac t{n+1} + \frac1{4\rho^2}\frac{t^2}{(n+1)^2}.
\]
Therefore, we again get from \eqref{product1}--\eqref{product2} that
\[
\big(1+\Gamma_{n+1}(t)\big)^2\frac{(1-x)^2}{4}\frac{r^2(x)}{4\rho^2} = 1 + \sum_{p=1}^{N-1}\frac{H_p^{**}(t)}{(n+1)^p} + \frac{\widetilde H_N^{**}(t)}{(n+1)^N},
\]
for any \( N\geq2 \), where \( H_1^{**}(t)=-(-1)^{n+1}(\alpha/\rho)t + \mathcal O(t^2) \), \( H_p^{**}(t)= \mathcal O(t^2) \), \( p\geq2\), and \( \widetilde H_N^{**}(t)=\mathcal O(t^2)\) as \( t\to0 \) while \( |H_p^{**}(t)| \) and \( |\widetilde H_N^{**}(t)| \) have similar bounds to \( |H_p(t)| \) and \( |\widetilde H_N(t)| \). Altogether, it holds that
\[
\frac{1-h_{n+1}^2(x)}{1-h^2(x)} = f^2(t/\rho)\left(1+\gamma(t/\rho)\sum_{p=1}^{N-1}\frac{J_p(t)}{(n+1)^p} + \gamma(t/\rho)\frac{\widetilde J_N(t)}{(n+1)^N}\right),
\]
where \( J_p(t) = f^{-2}(t/\rho) \big(H_p^*(t) - \gamma(t/\rho)H_p^{**}(t)\big) \) and a similar formula holds for \( \widetilde J_N(t) \). Observe that \( f^2(s) \) is a positive function for \( s>0 \) that tends to \( 1 \) as \( s\to\infty \) and such that \( f^2(s) = s^2/3 + \mathcal O(s^4) \) as \( s\to0 \). Therefore, it follows from the corresponding properties of \( H_p^*(t) \), \( H_p^{**}(t) \), \( \widetilde H_N^*(t) \), and \( \widetilde H_N^{**}(t) \) that \( J_p(t) \) and \( \widetilde J_N(t) \) have finite value at the origin and have moduli that satisfy similar bounds to \( |H_p(t)| \) and \( |\widetilde H_N(t)| \). Observe also that there exist \( n_N \) and \( c_N<1 \) such that
\[
\left|\gamma(t/\rho)\sum_{p=1}^{N-1}\frac{J_p(t)}{(n+1)^p} + \gamma(t/\rho)\frac{\widetilde J_N(t)}{(n+1)^N}\right| < c_N
\]
for all \( n\geq n_N \). Therefore, the claim of the lemma now follows from \eqref{taylor1}--\eqref{taylor3} applied with \( F(y)=\sqrt{1+y} \).
\end{proof}

\begin{lemma}
\label{lem:7}
Let \( x=-1+t/(n+1) \), \( t\in I_n \). There exist constants \( O_p \), \( p\geq 1 \), such that
\begin{multline*}
\frac{2\rho }\pi\int_0^{\sqrt{n+1}} \frac{f(t/\rho)}{t r(x)}  \dd t = \frac1{2\pi}\log(n+1)+\frac{A_0}2 - \frac1\pi\log(2\rho) + \frac1\pi \mathcal L\left(-1+\frac1{\sqrt{n+1}}\right) \\ +\sum_{p=1}^{N-1} \frac{O_p}{(n+1)^p} + \mathcal O_N\left((n+1)^{-N}\right), \quad 
\end{multline*}
for any \( N\geq1 \), where \( \mathcal O_N(\cdot) \) does not depend on \( n \) and
\[
\mathcal L(x) := \log\left(\frac{4\rho}{\rho(1-x)+r(x)}\right).
\]
\end{lemma}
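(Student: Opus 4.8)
The plan is to treat the integral as \( \frac{1}{\pi}\int_0^{\sqrt{n+1}} \frac{f(t/\rho)}{t}\,G\!\left(\frac{t}{n+1}\right)dt \), where \( G(y):=\frac{2\rho}{r(-1+y)}=\big(1-y+y^2/(4\rho^2)\big)^{-1/2} \) is analytic around the origin with \( G(0)=1 \), using the identity \( r(-1+y)=2\rho\sqrt{1-y+y^2/(4\rho^2)} \) established in the proof of Lemma~\ref{lem:4}. First I would split off the main contribution by writing \( G(t/(n+1))=1+\big(G(t/(n+1))-1\big) \), separating the integral into a leading piece \( \frac{1}{\pi}\int_0^{\sqrt{n+1}}\frac{f(t/\rho)}{t}\,dt \) and a correction. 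Note that the integrand has no singularity at \( t=0 \), since \( f(s)=s/\sqrt3+\mathcal O(s^3) \) as \( s\to0 \).

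For the leading piece I would substitute \( u=t/\rho \) and split at \( u=1 \):
\[
\frac1\pi\int_0^{\sqrt{n+1}/\rho}\frac{f(u)}u\,du=\frac1\pi\left(\int_0^1\frac{f(u)}u\,du+\int_1^\infty\frac{f(u)-1}u\,du+\log\frac{\sqrt{n+1}}\rho-\int_{\sqrt{n+1}/\rho}^\infty\frac{f(u)-1}u\,du\right).
\]
Using the definition \eqref{A0} of \( A_0 \) to rewrite the first two integrals as \( \frac{\pi A_0}2-\log2 \), and \( \log(\sqrt{n+1}/\rho)=\frac12\log(n+1)-\log\rho \), this piece equals \( \frac1{2\pi}\log(n+1)+\frac{A_0}2-\frac1\pi\log(2\rho) \) up to the tail \( \int_{\sqrt{n+1}/\rho}^\infty\frac{f(u)-1}u\,du \); since \( f(u)-1 \) decays like \( \gamma(u)^2 \), i.e.\ super-exponentially, this tail is \( \mathcal O_N((n+1)^{-N}) \) for every \( N \).

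For the correction I would split \( f=1+(f-1) \) once more. The part carrying the constant \( 1 \), after the substitution \( y=t/(n+1) \), becomes \( \frac1\pi\int_0^{1/\sqrt{n+1}}\frac{G(y)-1}y\,dy \); I would then verify that \( s\mapsto\int_0^s\frac{G(y)-1}y\,dy \) coincides with \( \mathcal L(-1+s) \) by checking that both vanish at \( s=0 \) and have equal derivatives (a direct computation with \( G(s)=1/\sqrt{w(s)} \), \( r(-1+s)=2\rho\sqrt{w(s)} \), \( w(s)=1-s+s^2/(4\rho^2) \)), which produces exactly the term \( \frac1\pi\mathcal L(-1+1/\sqrt{n+1}) \). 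The remaining part \( \frac1\pi\int_0^{\sqrt{n+1}}\frac{(f(t/\rho)-1)\big(G(t/(n+1))-1\big)}t\,dt \) is handled by expanding \( G(t/(n+1))-1=\sum_{p\ge1}G_p t^p/(n+1)^p \) via the machinery \eqref{taylor}--\eqref{taylor3}; integrating term by term against \( (f(t/\rho)-1)/t \) and extending each \( \int_0^{\sqrt{n+1}}(f(t/\rho)-1)t^{p-1}\,dt \) to \( \int_0^\infty \) (the difference being super-exponentially small by the decay of \( f-1 \)) yields convergent constants, whence the coefficients \( O_p \); the remainder coming from \( \widetilde G_N \) is \( \mathcal O_N((n+1)^{-N}) \) by the bounds in \eqref{taylor} together with the rapid decay of \( f-1 \).

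The main obstacle is the identification of the closed-form term \( \frac1\pi\mathcal L(-1+1/\sqrt{n+1}) \): the correction integrals \( \int_0^{\sqrt{n+1}}t^{p-1}\,dt \) individually produce half-integer powers of \( n+1 \), and these must be repackaged exactly into the antiderivative \( \int_0^s(G(y)-1)y^{-1}\,dy \) so that only integer powers \( (n+1)^{-p} \) survive as separate terms. Recognizing that this antiderivative is precisely \( \mathcal L(-1+\cdot) \), and keeping it in closed form rather than expanding it, is what makes the expansion collapse to the stated shape; the accompanying remainder and tail estimates, while routine given \eqref{taylor}--\eqref{taylor3} and the super-exponential decay of \( f-1 \), must be carried out uniformly in \( t\in I_n \) to legitimately absorb everything beyond order \( N \) into \( \mathcal O_N((n+1)^{-N}) \).
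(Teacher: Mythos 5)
Your proposal is correct, and it organizes the computation differently from the paper. The paper splits the domain of integration at \( t=\rho \): on \( [0,\rho] \) it expands \( 2\rho/r(x) \) (its \eqref{7:1}, your \( G \)) against the full \( f \), while on \( [\rho,\sqrt{n+1}] \) it writes \( f=1+(f-1) \), integrating the ``\(1\)''-part exactly via the antiderivative \( \log(1+x)+\mathcal L(x) \) and expanding the \( (f-1) \)-part with tail estimates. The price of that route is that the exact integration leaves behind the endpoint value \( \mathcal L(-1+\rho/(n+1)) \), which must itself be Taylor-expanded, so the paper's constants are assembled from three sources, \( O_p=L_p-l_p\rho^p/\pi+M_p \). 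Your algebraic splitting \( fG=f+(G-1)+(f-1)(G-1) \) over the whole interval avoids this: the \( f \)-part produces \( \frac1{2\pi}\log(n+1)+\frac{A_0}2-\frac1\pi\log(2\rho) \) directly from \eqref{A0}; the \( (G-1) \)-part integrates exactly to \( \frac1\pi\mathcal L(-1+1/\sqrt{n+1}) \) with no endpoint correction, since \( \mathcal L(-1+s)\to0 \) as \( s\to0 \) --- and your identity \( \frac{\dd}{\dd s}\mathcal L(-1+s)=(G(s)-1)/s \) is precisely the paper's observation that \( \log(1+x)+\mathcal L(x) \) is an antiderivative of \( 2\rho/((1+x)r(x)) \), which does check out; and the \( (f-1)(G-1) \)-part yields all the \( O_p \) as a single family of convergent integrals \( (G_p\rho^p/\pi)\int_0^\infty(f(u)-1)u^{p-1}\,\dd u \). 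These agree with the paper's constants, as they must, because \( \mathcal L(-1+y)=\int_0^y(G(s)-1)s^{-1}\dd s \) forces \( l_p=r_p^*/p \) (here your \( G_p \) are the paper's \( r_p^* \)). Both routes rest on the same two structural facts --- analyticity of \( y\mapsto2\rho/r(-1+y) \) at the origin and the closed-form antiderivative --- but yours localizes the half-integer-power bookkeeping, which you correctly identified as the crux, in one clean identity. Two minor points: the decay \( 1-f(t)<8t^2e^{-2t} \) of \eqref{7:3} is exponential rather than ``super-exponential,'' though it beats every power of \( n+1 \) once \( t\geq\sqrt{n+1}/\rho \), which is all your tails need (this is \eqref{7:4}); and your term-by-term integration requires the uniform bound \( |\widetilde G_N(t)|\leq C^{N+1} \) on \( I_n \) from \eqref{taylor}, which holds because \( t/(n+1)\leq(n+1)^{-1/2} \) stays in a fixed disk where \( G \) is analytic.
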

\begin{proof}
Similarly to \eqref{e-11}, there exist constants \( r_p^* \) such that
\begin{equation}
\label{7:1}
\frac{2\rho}{r(x)} = 1 + \sum_{p=1}^{N-1} \frac{r_p^*t^p}{(n+1)^p} + \frac{\tilde r^*_N(t)t^N}{(n+1)^N},
\end{equation}
for any \( N\geq 1 \), where \( |\tilde r^*_N(t)| \) is bounded above on \( I_n \) by a constant that depends only on \( N \). Then
\[
\mathcal I_1 := \frac{2\rho }\pi\int_0^\rho \frac{f(t/\rho)}{t r(x)}  \dd t = \frac1\pi \int_0^1\frac{f(t)}{t}\dd t + \sum_{p=1}^{N-1} \frac{L_p}{(n+1)^p} + \mathcal O_N\left((n+1)^{-N}\right),
\]
where \( L_p := (r_p^*\rho^p/\pi)\int_0^1 f(t)t^{p-1}\dd t\) and \( \mathcal O_N(\cdot) \) does not depend on \( n \). Furthermore, it holds that
\begin{equation}
\label{7:2}
\mathcal I_2 :=  \frac{2\rho }\pi\int_\rho^{\sqrt{n+1}} \frac{\dd t}{t r(x)} = \frac{2\rho}\pi \int_{-1+\rho/(n+1)}^{-1+1/\sqrt{n+1}} \frac{dx}{(1+x)r(x)}.
\end{equation}
It can be easily verified by differentiation that an antiderivative of \( 2\rho/((1+x)r(x)) \) is \( \log(1+x) + \mathcal L(x) \). Again, similarly to \eqref{e-11}, there exist constants \( l_p \) such that
\[
\mathcal L(x) = \sum_{p=1}^{N-1} \frac{l_pt^p}{(n+1)^p} + \frac{\tilde l_N(t)t^N}{(n+1)^N},
\]
for any \( N\geq 1 \), where \( |\tilde l_N(t)| \) is bounded above on \( I_n \) by a constant that depends only on \( N \). Therefore, it holds that
\[
\mathcal I_2 = \frac1{2\pi}\log(n+1) - \frac1\pi\log\rho + \frac1\pi\mathcal L\left(-1+\frac1{\sqrt{n+1}}\right) - \sum_{p=1}^{N-1} \frac{l_p\rho^p/\pi}{(n+1)^p} + \mathcal O_N\left((n+1)^{-N}\right),
\]
where, again, \( \mathcal O_N(\cdot) \) does not depend on \( n \). Next, we have from \eqref{7:1} that
\begin{multline*}
\mathcal I_3 := \frac{2\rho }\pi\int_\rho^{\sqrt{n+1}} \frac{f(t/\rho)-1}{t r(x)}\dd t = \frac1\pi\int_1^{\sqrt{n+1}/\rho}\frac{f(t)-1}t \dd t + \\ \sum_{p=1}^{N-1} \frac{r_p^*\rho^p/\pi}{(n+1)^p}\int_1^{\sqrt{n+1}/\rho}(f(t)-1)t^{p-1} \dd t + \frac{\rho^N/\pi}{(n+1)^N} \int_1^{\sqrt{n+1}/\rho}(f(t)-1)\tilde r_N^*(\rho t)t^{N-1} \dd t
\end{multline*}
for any \( N\geq1 \). Notice that
\begin{equation}
\label{7:3}
0< 1 - f(t) < t^2\mathrm{csch}^2(t) < 8t^2e^{-2t}, \quad t\geq 1.
\end{equation}
Therefore, it holds that
\begin{equation}
\label{7:4}
0 < \int_{\sqrt{n+1}/\rho}^\infty (1-f(t))t^{p-1}\dd t  \leq  C_p(n+1)^{(p+1)/2}e^{-(2/\rho)\sqrt{n+1}} = o_N\left( (n+1)^{-N} \right)
\end{equation}
for any \( p\geq0 \) and \( N\geq 1 \) and some constant \( C_p \) that depends only on \( p \), where \( o_N(\cdot) \) does not depend on \( n \). Moreover, since \( |\tilde r^*_N(t)| \) is bounded above on \( I_n \) by a constant that depends only on \( N \), we have that
\[
\left|\int_1^{\sqrt{n+1}/\rho}(f(t)-1)\tilde r_N^*(\rho t)t^{N-1} \dd t\right| \leq C_N^*\int_1^\infty(1-f(t))t^{N-1} \dd t = C_N^{**}.
\]
Thus, we can conclude that
\[
\mathcal I_3 = \frac1\pi\int_1^\infty\frac{f(t)-1}t \dd t + \sum_{p=1}^{N-1}\frac{M_p}{(n+1)^p} + \mathcal O_N\left( (n+1)^{-N} \right),
\]
where \( M_p := (r_p^*\rho^p/\pi)\int_1^\infty (f(t)-1)t^{p-1}\dd t\) and \( \mathcal O_N(\cdot) \) does not depend on \( n \). Since the integral in the statement of the lemma is equal to \( \mathcal I_1 + \mathcal I_2 + \mathcal I_3 \), the desired claim now follows from the definition of \( A_0 \) in \eqref{A0}, where \( O_p = L_p - l_p\rho^p/\pi + M_p \).
\end{proof}

\begin{lemma}
\label{lem:8}
There exist constants \( T_p \) such that
\begin{multline*}
\frac2\pi\int_{-1}^{-1+1/\sqrt{n+1}}\frac{\sqrt{1-h_{n+1}^2(x)}}{1-x^2}\dd x = \frac1{2\pi}\log(n+1)+\frac{A_0}2 - \frac1\pi\log(2\rho) +  \\ \frac1\pi \mathcal L\left(-1+\frac1{\sqrt{n+1}}\right) + \sum_{p=1}^{N-1} \frac{T_p}{(n+1)^p} + \mathcal O_N\left((n+1)^{-N}\right), \quad 
\end{multline*}
for any \( N\geq1 \), where \( \mathcal O_N(\cdot) \) does not depend on \( n \).
\end{lemma}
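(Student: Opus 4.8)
The plan is to reduce the integral to the range already handled by Lemma~\ref{lem:7} via the substitution \( x=-1+t/(n+1) \) together with the expansion of Lemma~\ref{lem:6}. First I would record that under this substitution \( \dd x = \dd t/(n+1) \), the endpoints \( x=-1 \) and \( x=-1+1/\sqrt{n+1} \) correspond to \( t=0 \) and \( t=\sqrt{n+1} \), and since \( 1+x=t/(n+1) \) one has \( 1-x^2=(1-x)\,t/(n+1) \). Hence
\[
\frac2\pi\int_{-1}^{-1+1/\sqrt{n+1}}\frac{\sqrt{1-h_{n+1}^2(x)}}{1-x^2}\dd x = \frac2\pi\int_0^{\sqrt{n+1}}\frac{\sqrt{1-h_{n+1}^2(x)}}{(1-x)\,t}\dd t.
\]
Inserting \eqref{hn-K} splits the integrand into its leading part \( \rho f(t/\rho)/(t\,r(x)) \) and a correction carrying the factor \( \gamma(t/\rho) \). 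The leading part reproduces \emph{exactly} the integral evaluated in Lemma~\ref{lem:7}, which supplies \( \frac1{2\pi}\log(n+1)+\frac{A_0}2-\frac1\pi\log(2\rho)+\frac1\pi\mathcal L(-1+1/\sqrt{n+1}) \) together with a series \( \sum O_p(n+1)^{-p} \) and an \( \mathcal O_N((n+1)^{-N}) \) remainder.

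It then remains to expand the correction
\[
\frac{2\rho}\pi\int_0^{\sqrt{n+1}}\frac{f(t/\rho)\gamma(t/\rho)}{t\,r(x)}\left(\sum_{p=1}^{N-1}\frac{K_p(t)}{(n+1)^p}+\frac{\widetilde K_N(t)}{(n+1)^N}\right)\dd t.
\]
To do so I would replace \( 2\rho/r(x) \) by its expansion \eqref{7:1} and multiply the two power series in \( (n+1)^{-1} \) using \eqref{product1}--\eqref{product2}; this rewrites the integrand as \( \frac1\pi f(t/\rho)\gamma(t/\rho)/t \) times a polynomial in \( t \) organized by powers of \( (n+1)^{-1} \). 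The coefficient of \( (n+1)^{-p} \) is a finite linear combination of integrals \( \int_0^{\sqrt{n+1}}f(t/\rho)\gamma(t/\rho)t^{m-1}\,\dd t \) with \( m\geq1 \). The key analytic input is that each such integrand is bounded near \( t=0 \) (where \( f(t/\rho)/t \) and \( \gamma(t/\rho) \) tend to finite limits, by the behavior of \( f \) recorded in the proof of Lemma~\ref{lem:6}) and decays like \( t^m e^{-t/\rho} \) as \( t\to\infty \), since \( \gamma(s)\sim 2s\,e^{-s} \). Hence each integral converges as \( \sqrt{n+1}\to\infty \), and the error from extending the upper limit to \( +\infty \) is \( \mathcal O\big((n+1)^{m/2}e^{-\sqrt{n+1}/\rho}\big)=o_N((n+1)^{-N}) \), exactly as in \eqref{7:4}. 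This produces constants \( S_p \) so that the correction equals \( \sum_{p=1}^{N-1}S_p(n+1)^{-p}+\mathcal O_N((n+1)^{-N}) \), and I would set \( T_p:=O_p+S_p \).

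The main obstacle is the uniform control of the remainder. The terms generated by \( \widetilde K_N(t) \) and by the tail \( \tilde r_N^*(t)t^N \) in \eqref{7:1}, together with all products whose total order in \( (n+1)^{-1} \) reaches \( N \), must be shown to be genuinely \( \mathcal O_N((n+1)^{-N}) \). Here I would use that \( 2\rho/r(x) \) stays bounded on \( x\in[-1,-1+1/\sqrt{n+1}] \) (since \( r(-1+y)=2\rho\sqrt{1-y+y^2/(4\rho^2)} \) is bounded away from \( 0 \) for small \( y \)), that \( |\widetilde K_N(t)| \) is dominated by a polynomial of degree \( 2N \) with \( N \)-dependent coefficients, and that multiplying by the exponentially decaying \( f(t/\rho)\gamma(t/\rho)/t \) keeps every remaining integral finite and uniformly bounded in \( n \). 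Combined with the prefactor \( (n+1)^{-N} \), this yields the claimed bound, and collecting the constants completes the proof.
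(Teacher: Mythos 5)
Your proposal matches the paper's proof essentially step for step: the same substitution \( x=-1+t/(n+1) \), the same splitting via \eqref{hn-K} into the Lemma~\ref{lem:7} leading term plus a \( \gamma \)-weighted correction, the same multiplication of that correction by the expansion \eqref{7:1} of \( 2\rho/r(x) \) using \eqref{product1}--\eqref{product2}, and the same extension of the resulting integrals to \( +\infty \) with exponentially small error, giving \( T_p = O_p + \) a convergent integral. The only slips are cosmetic: the coefficient functions \( K_p(t) \) (hence \( S_p(t) \)) are not polynomials in \( t \) but only bounded in modulus by polynomials, which is all your estimates actually use, and for \( N=1 \) one should note, as the paper does, that the \( N\geq2 \) expansion of Lemma~\ref{lem:6} applies with the term \( T_1/(n+1) \) absorbed into the \( \mathcal O_N((n+1)^{-1}) \) remainder.
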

\begin{proof}
Recall \eqref{hn-K}. It follows from \eqref{7:1} and \eqref{product1}--\eqref{product2} that
\[
\frac{2\rho}{r(x)}\left(\sum_{p=1}^{N-1}\frac{K_p(t)}{(n+1)^p} + \frac{\widetilde K_N(t)}{(n+1)^N}\right) = \sum_{p=1}^{N-1}\frac{S_p(t)}{(n+1)^p} + \frac{\widetilde S_N(t)}{(n+1)^N}
\]
for any \( N\geq 2 \), where \( |S_p(t)| \) is bounded above by a polynomial of degree \( 2p \) independent of \( n,N \) while \( |\widetilde S_N(t)| \) is bounded above on \( I_n \) by a polynomial of degree \( 2N \) whose coefficients depend on \( N \) but not on \( n \). Similarly to \eqref{7:3}, it holds that \( \gamma(s) < 3se^{-s} \) for \( s\geq\log2 \). Because \( f(s)\to1 \) as \( s\to\infty \), it holds as in \eqref{7:4} that
\[
0 < \int_{\sqrt{n+1}/\rho}^\infty |\rho S_p(\rho t)| \gamma(t)f(t)\dd t  \leq  C_p(n+1)^{p+1/2} e^{-\sqrt{n+1}/\rho} = o_N\left( (n+1)^{-N} \right)
\]
for any \( p\geq1 \) and \( N\geq 1 \) and some constant \( C_p \) that depends only on \( p \), where \( o_N(\cdot) \) does not depend on \( n \). Moreover, a similar estimate takes place if \( S_p(t) \) is replaced by \( \widetilde S_N(t) \). The claim of the lemma now follows by making a substitution \( x=-1+t/(n+1) \) to get
\[
\frac2\pi\int_{-1}^{-1+1/\sqrt{n+1}}\frac{\sqrt{1-h_{n+1}^2(x)}}{1-x^2}\dd x = \frac2\pi\int_0^{\sqrt{n+1}}\frac{\sqrt{1-h_{n+1}^2(x)}}{1-x}\frac{\dd t}t
\]
and then using Lemmas~\ref{lem:6} and~\ref{lem:7}, where \( T_p = O_p + (\rho/\pi) \int_0^\infty f(t)\gamma(t)S_p(\rho t)\dd t \) (since \( T_1/(n+1) = \mathcal O_N((n+1)^{-1})\), the claim indeed holds for all \( N\geq1 \)).
\end{proof}

\begin{lemma}
\label{lem:9}
It holds that
\begin{multline*}
 \frac2\pi\int_{-1+1/\sqrt{n+1}}^{1-\delta^{n+1}_\alpha} \frac{\sqrt{1-h_{n+1}^2(x)}}{1-x^2}\, \dd x = \frac1{2\pi} \log(n+1) + \\  \frac1\pi\log\left(\frac{4\rho}{|\alpha|}\right) - \frac1\pi\mathcal L\left(-1+\frac1{\sqrt{n+1}}\right) + o_N\left( (n+1)^{-N} \right)
\end{multline*}
for any integer \( N\geq1 \), where \( o_N(\cdot) \) is independent of \( n \), but does depend on \( N \).
\end{lemma}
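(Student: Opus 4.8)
The plan is to exploit the fact that, on the bulk interval $[-1+1/\sqrt{n+1},\,1-\delta_\alpha^{n+1}]$, Lemma~\ref{lem:3} (that is, \eqref{hn-asymp}) guarantees that $h_{n+1}(x)$ is exponentially close to the limiting function $h(x)=-\alpha(x+1)/r(x)$. I would therefore split the integral into a main term, obtained by replacing $h_{n+1}$ with $h$, and a remainder, and show that the remainder is $o_N((n+1)^{-N})$ for every $N$. The main term can be integrated in closed form using the antiderivative already recorded in Lemma~\ref{lem:7}, and evaluating it at the two endpoints should produce exactly the right-hand side of the statement.

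For the remainder I would write, using $\sqrt a-\sqrt b=(a-b)/(\sqrt a+\sqrt b)$ together with $\sqrt{1-h_{n+1}^2}+\sqrt{1-h^2}\geq\sqrt{1-h^2}$ (both radicands are nonnegative, as the integrand is a density),
\[
\frac{\big|\,\sqrt{1-h_{n+1}^2(x)}-\sqrt{1-h^2(x)}\,\big|}{1-x^2}\leq\frac{|h^2(x)-h_{n+1}^2(x)|}{(1-x^2)\sqrt{1-h^2(x)}},
\]
and then insert the two identities $1-h^2(x)=\rho^2(1-x)^2/r^2(x)$ (used already in Lemma~\ref{lem:6}) and $h^2(x)=\alpha^2(1+x)^2/r^2(x)$. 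By \eqref{hn-asymp} one has $|h_{n+1}^2-h^2|\leq C h^2(x)(1-x)^2(n+1)e^{-\sqrt{n+1}/\rho}$, and the crucial point is that after substituting these expressions every power of $(1-x)$ cancels, leaving the uniform bound
\[
\frac{\big|\,\sqrt{1-h_{n+1}^2(x)}-\sqrt{1-h^2(x)}\,\big|}{1-x^2}\leq C'\,\frac{1+x}{r(x)}\,(n+1)e^{-\sqrt{n+1}/\rho}\leq C''(n+1)e^{-\sqrt{n+1}/\rho},
\]
where I use $r(x)\geq 2|\alpha|\rho$ on $[-1,1]$ so that $(1+x)/r(x)$ is bounded. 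Integrating over an interval of length at most $2$ then yields a remainder that is $o_N((n+1)^{-N})$ for every $N$. The hard part is precisely this estimate near $x=1$, where $\sqrt{1-h^2}$ degenerates; it is resolved only because the error term in \eqref{hn-asymp} carries the factor $(1-x)^2$, which exactly compensates the vanishing denominator.

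For the main term I would use $\tfrac2\pi\,\sqrt{1-h^2(x)}/(1-x^2)=\tfrac1\pi\,2\rho/\big((1+x)r(x)\big)$, whose antiderivative is $\log(1+x)+\mathcal L(x)$ by Lemma~\ref{lem:7}. Evaluating $\tfrac1\pi\big[\log(1+x)+\mathcal L(x)\big]$ between the endpoints, at the lower endpoint $x=-1+1/\sqrt{n+1}$ one gets $\log(1+x)=-\tfrac12\log(n+1)$ together with the term $\tfrac1\pi\mathcal L(-1+1/\sqrt{n+1})$, which account for the $\tfrac1{2\pi}\log(n+1)$ and the $-\tfrac1\pi\mathcal L$ contributions in the statement. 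At the upper endpoint $x=1-\delta_\alpha^{n+1}$ I would use $r(1)=2|\alpha|$ to get $\mathcal L(1)=\log(2\rho/|\alpha|)$ and $\log(1+x)\to\log 2$, so that the upper endpoint contributes $\tfrac1\pi\log(4\rho/|\alpha|)$. The discrepancies caused by the nonzero $\delta_\alpha^{n+1}$ are $\mathcal O(\delta_\alpha^{n+1})$, and vanish identically when $\alpha<0$ (where $\delta_\alpha=0$); since $\delta_\alpha<1$ these are again $o_N((n+1)^{-N})$. Adding the main term and the remainder then gives the claim.
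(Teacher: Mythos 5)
Your proposal is correct and follows essentially the same route as the paper: the same splitting into a main term with \( h \) and a remainder, the same algebraic bound \( |\sqrt{1-h_{n+1}^2}-\sqrt{1-h^2}|/(1-x^2)\leq |h_{n+1}^2-h^2|/\big((1-x^2)\sqrt{1-h^2}\big) \) with the \( (1-x)^2 \) factor from \eqref{hn-asymp} cancelling the degeneracy of \( \sqrt{1-h^2} \) near \( x=1 \), and the same antiderivative \( \log(1+x)+\mathcal L(x) \) for the main term. Your treatment of the upper endpoint (estimating the antiderivative's variation over \( [1-\delta_\alpha^{n+1},1] \)) is equivalent to the paper's (bounding the tail integral directly), so there is nothing substantive to distinguish the two proofs.
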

\begin{proof}
Since \( |h_{n+1}(x)|,|h(x)|\leq 1 \) when \( x\in[-1,1] \), it holds that
\begin{eqnarray*}
\left|\frac{ \sqrt{1-h^2(x)} - \sqrt{1-h_{n+1}^2(x)}}{1-x^2}\right| & = & \frac{\big|h_{n+1}^2(x) - h^2(x)\big|}{ (1-x^2)\big( \sqrt{1-h^2(x)} + \sqrt{1-h_{n+1}^2(x)} \big) } \\
&\leq& \frac{2|h_{n+1}(x) - h(x)|}{ (1-x^2)\sqrt{1-h^2(x)}} = \frac2\rho\frac{r(x)}{(1+x)} \frac{|h_{n+1}(x) - h(x)|}{(1-x)^2}.
\end{eqnarray*}
Since \( r(x) \leq 2 \), \( x\in[-1,1] \), we obtain from \eqref{hn-asymp} that
\[
\left|\frac{ \sqrt{1-h^2(x)} - \sqrt{1-h_{n+1}^2(x)}}{1-x^2}\right| \leq C(n+1)^{3/2}e^{-\sqrt{n+1}/\rho}
\]
for \( -1+1/\sqrt{n+1}\leq x\leq 1-\delta^{n+1}_\alpha \) and some constant \( C \). Therefore, it holds that
\[
\left|\frac2\pi\int_{-1+1/\sqrt{n+1}}^{1-\delta_\alpha^{n+1}}\frac{ \sqrt{1-h^2(x)} - \sqrt{1-h_{n+1}^2(x)}}{1-x^2}\, \dd x \right| = o_N\left( (n+1)^{-N} \right)
\]
for any \( N\geq1 \), where \( o_N(\cdot) \) is independent of \( n \), but does depend on \( N \). Furthermore, since \( r(x)\geq 2|\alpha|\rho \) for \( x\in[-1,1] \), it holds that
\[
 \frac 2\pi \int_{1-\delta^{n+1}_\alpha}^1\frac{\sqrt{1-h^2(x)}}{1-x^2} \dd x =  \frac 2\pi \int_{1-\delta^{n+1}_\alpha}^1 \frac{\rho\dd x}{(1+x)r(x)} \leq \frac{\delta^{n+1}_\alpha}{|\alpha|\pi} = o_N\left( (n+1)^{-N} \right)
\]
for any \( N\geq1 \) by the very definition of \( \delta_\alpha \), where, again, \( o_N(\cdot) \) is independent of \( n \), but does depend on \( N \). The observation made after \eqref{7:2} allows us now to conclude that
\[
\frac 2\pi \int_{-1+1/\sqrt{n+1}}^1 \frac{\rho\dd x}{(1+x)r(x)} = \frac1{2\pi} \log(n+1) + \frac1\pi\log\left(\frac{4\rho}{|\alpha|}\right) - \frac1\pi \mathcal L\left(-1+\frac1{\sqrt{n+1}}\right),
\]
which finishes the proof of the lemma.
\end{proof}

\begin{lemma}
\label{lem:10}
When \( \alpha>0 \), it holds that
\[
\frac2\pi\int_{1-\delta_\alpha^{n+1}}^1 \frac{\sqrt{1-h_{n+1}^2(x)}}{1-x^2}\, \dd x = 1 + o_N\left( (n+1)^{-N} \right)
\]
for any \( N\geq1 \), where \( o_N(\cdot) \) is independent of \( n \), but does depend on \( N \).
\end{lemma}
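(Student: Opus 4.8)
The plan is to obtain, on the short interval $\big[1-\delta_\alpha^{n+1},1\big]$, an \emph{exact} closed form for $\rho_n(x)=\tfrac1\pi\sqrt{1-h_{n+1}^2(x)}/(1-x^2)$ and then to evaluate the resulting elementary integral. First I would freeze the small parameter: since $\epsilon(1)=\delta_\alpha^3$ and $\epsilon'(1)=0$ by \eqref{e-prime}, one checks that $\epsilon^{n+1}(x)=\epsilon^{n+1}(1)\big(1+\mathcal O((n+1)(1-x)^2)\big)$ on this interval, so that replacing $\epsilon^{n+1}(x)$ throughout by the constant $e:=\epsilon^{n+1}(1)=\delta_\alpha^{3(n+1)}$ — and, in the expression for $b_{n+1}'$ from the proof of Lemma~\ref{lem:2}, dropping the term carrying $\epsilon'(x)$, whose weight $(n+1)(1-x)^2\epsilon^{n+1}(x)$ is super-exponentially small — introduces only an $o_N\big((n+1)^{-N}\big)$ error.

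The key step is an exact simplification. Using the identity $S-2re+Re^2=(1-e)(S-Re)$ together with $R+S=2r$ and $R-S=2\alpha(1+x)$, the ratio in \eqref{set-up} collapses to
\[
h_{n+1}(x)=-\frac{\alpha(1+x)(S-Re)}{r(S+Re)},
\]
whence $1-h_{n+1}=2RS(1+e)/\big((R+S)(S+Re)\big)$ and $1+h_{n+1}=2(S^2+R^2e)/\big((R+S)(S+Re)\big)$. Invoking \eqref{SR} in the form $RS=\rho^2(1-x)^2$ then yields the clean formula
\[
\rho_n(x)=\frac1\pi\frac{\rho\sqrt{(1+e)(S^2+R^2e)}}{(1+x)\,r\,(S+Re)} .
\]
Now I would pass to the variable $v:=S/R$, which runs from $v_*:=S/R\big|_{x=1-\delta_\alpha^{n+1}}$ down to $0$ as $x\uparrow1$. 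Two observations make this substitution decisive: $\sqrt{S^2+R^2e}/(S+Re)=\sqrt{v^2+e}/(v+e)$, and, since an antiderivative of $2\rho/((1+x)r)$ is $\log(1+x)+\mathcal L(x)=\log\frac{4\rho}{\alpha}+\log\frac{1-\sqrt v}{1+\sqrt v}$ (rewrite $\rho(1-x)+r=(\sqrt R+\sqrt S)^2/2$ and $1+x=(R-S)/(2\alpha)$), one gets $\rho\,\dd x/((1+x)r)=-\dd v/\big(2(1-v)\sqrt v\big)$. Hence
\[
\frac2\pi\int_{1-\delta_\alpha^{n+1}}^1\frac{\sqrt{1-h_{n+1}^2(x)}}{1-x^2}\,\dd x=\frac{\sqrt{1+e}}\pi\int_0^{v_*}\frac{\sqrt{v^2+e}}{(v+e)(1-v)\sqrt v}\,\dd v .
\]

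Finally I would evaluate this integral by scaling $v=ew$. Because $v_*\sim\rho^2\delta_\alpha^{2(n+1)}/(16\alpha^2)$, one has $v_*/e\to\infty$ while $v_*^2/e\to0$, so $ew^2\le v_*^2/e\to0$ uniformly and $\sqrt{1+ew^2}=1+o_N\big((n+1)^{-N}\big)$ on $[0,v_*/e]$; likewise $\sqrt{1+e}=1+\mathcal O(e)$ and $(1-v)^{-1}=1+\mathcal O(v_*)$. The integral thus reduces to $\tfrac1\pi\int_0^{v_*/e}\dd w/\big((1+w)\sqrt w\big)=\tfrac2\pi\arctan\sqrt{v_*/e}$, and $\tfrac\pi2-\arctan\sqrt{v_*/e}=\arctan\sqrt{e/v_*}=\mathcal O\big(\delta_\alpha^{(n+1)/2}\big)$. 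Collecting the errors, all of which are exponentially or super-exponentially small and hence $o_N\big((n+1)^{-N}\big)$ for every $N$, gives the asserted value $1+o_N\big((n+1)^{-N}\big)$. The main obstacle is the exact factorization of $1-h_{n+1}^2$: the naive expansion used in Lemmas~\ref{lem:4}--\ref{lem:6} is unavailable here because $h_{n+1}\to1$ on this interval and the leading orders cancel, so recovering $\sqrt{1-h_{n+1}^2}$ demands the closed form above rather than an asymptotic one; once that and the substitution $v=S/R$ are in hand, the remaining analysis is an elementary, uniformly controlled arctangent computation.
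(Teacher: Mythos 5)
Your proposal is correct, and it reaches the same destination as the paper --- an exact closed form for the integrand on \( [1-\delta_\alpha^{n+1},1] \) followed by an arctangent evaluation with exponentially small error --- but by a genuinely different mechanism. The paper never replaces \( \epsilon^{n+1}(x) \) by a constant: starting from \eqref{hn-rep} and \eqref{SR} it writes the density exactly as \( \epsilon^{(n+1)/2}(x)V_{n+1}(x)/\big((1-x)^2+\epsilon^{n+1}(x)Y_{n+1}(x)\big) \), introduces an auxiliary function \( Z_{n+1}(x) \) chosen so that the same kernel with \( Z_{n+1} \) in place of \( V_{n+1} \) is exactly the derivative of \( \arctan\big((x-1)/\sqrt{\epsilon^{n+1}(x)Y_{n+1}(x)}\big) \), integrates this exactly to \( \pi/2+o_N\left((n+1)^{-N}\right) \), and then shows \( Z_{n+1}=V_{n+1}+o_N\left((n+1)^{-N}\right) \) uniformly. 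Your route instead freezes \( \epsilon^{n+1}(x) \) at \( e=\epsilon^{n+1}(1) \) and drops the \( \epsilon' \)-term, which buys you the exact factorizations of \( 1\mp h_{n+1} \) and the clean substitution \( v=S/R \); your algebraic identities, the differential identity \( \rho\,\dd x/((1+x)r)=-\dd v/\big(2(1-v)\sqrt v\big) \), and the size estimates \( v_*/e\to\infty \), \( v_*^2/e\to0 \) all check out, so the integral collapses to \( \tfrac2\pi\arctan\sqrt{v_*/e}+o_N\left((n+1)^{-N}\right) \). What the paper's version buys is that no approximation is ever made inside the nonlinearity; what yours buys is a completely elementary, explicitly evaluated integral.

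The one step you must write out more carefully is the opening claim that the freezing costs only \( o_N\left((n+1)^{-N}\right) \). A uniform \emph{additive} bound on the perturbation of \( h_{n+1} \) is not sufficient here: \( \int_{1-\delta_\alpha^{n+1}}^1 \dd x/(1-x^2) \) diverges and \( 1-h_{n+1}^2 \) itself vanishes at \( x=1 \), so the error must be controlled \emph{relative} to the main terms (or shown to vanish like \( (1-x)^2 \) with super-exponentially small coefficients). This does work: both the dropped term and the freezing error enter \( 1\mp h_{n+1} \) through quantities of size \( \mathcal O\big((n+1)(1-x)^2\epsilon^{n+1}\big) \), while the main term of \( 1-h_{n+1} \) is comparable to \( (1-x)^2 \) and that of \( 1+h_{n+1} \) is bounded below by a constant multiple of \( R^2\epsilon^{n+1} \); hence both relative errors are \( \mathcal O\big((n+1)\delta_\alpha^{2(n+1)}\big) \), the density changes only by a factor \( 1+o_N\left((n+1)^{-N}\right) \), and since your frozen integral is bounded, the conclusion follows. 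So the gap is expository rather than fatal, but as written the single sentence justifying the freezing hides exactly the point where the argument could have failed.
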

\begin{proof}
It follows from \eqref{hn-rep} and \eqref{SR} that
\[
h_{n+1}(x) = h(x) - h(x)\frac{\epsilon^{n+1}(x)X_{n+1}(x)}{(1-x)^2+\epsilon^{n+1}(x)Y_{n+1}(x)},
\]
where
\[
X_{n+1}(x) := \frac{R(x)}{\alpha\rho^2}\left((n+1)r(x)\frac{(1-x)^2}x + 2\alpha R(x)\big(1-\epsilon^{n+1}(x)\big)\right)
\]
and
\[
Y_{n+1}(x) :=\frac{R(x)}{\rho^2}\left(2\alpha(x+1) - R(x)\epsilon^{n+1}(x)\right).
\]
Therefore, we can write
\begin{multline*}
1-h_{n+1}^2(x) = \rho^2\frac{(1-x)^2}{r^2(x)} + h^2(x)\frac{(1-x)^2\epsilon^{n+1}(x)X_{n+1}(x)}{((1-x)^2+\epsilon^{n+1}(x)Y_{n+1}(x))^2} \times \\ \left(2 - \epsilon^{n+1}(x)\frac{X_{n+1}(x)-2Y_{n+1}(x)}{(1-x)^2}\right).
\end{multline*}
We have that
\[
\frac{X_{n+1}(x) - 2Y_{n+1}(x)}{(1-x)^2} = \frac{R(x)}{\rho^2(1-x)^2}\left(2S(x)  + (n+1)\frac{(1-x)^2r(x)}{\alpha x}\right) = 2+ (n+1)\frac{r(x)R(x)}{\alpha\rho^2 x},
\]
where we used \eqref{SR} once more. Hence, it holds that
\begin{equation}
\label{dens}
\frac{\sqrt{1-h_{n+1}^2(x)}}{1-x^2} = \frac{\epsilon^{(n+1)/2}(x)V_{n+1}(x)}{(1-x)^2+\epsilon^{n+1}(x)Y_{n+1}(x)},
\end{equation}
where
\begin{multline*}
V_{n+1}(x) := -\frac2\rho\frac{h(x)R(x)}{1+x}\sqrt{\left(1-\epsilon^{n+1}(x)\left(1+(n+1)\frac{r(x)R(x)}{2\alpha\rho^2 x}\right)\right)\times} \\
\overline{\times\left(1-\epsilon^{n+1}(x)+(n+1)\frac{(1-x)^2r(x)}{2\alpha xR(x)}\right) + \left(\rho^2\frac{(1-x)^2+\epsilon^{n+1}(x)Y_{n+1}(x)}{2\epsilon^{(n+1)/2}(x)h(x)r(x)R(x)}\right)^2}
\end{multline*}
(observe that \( -h(x)>0 \)). Recall that \( \delta_\alpha = \epsilon^{1/3}(1) \). In particular, we get from \eqref{e-prime} that
\begin{equation}
\label{e1e1d}
\frac{(1-x)^2}{\epsilon^{(n+1)/2}(x)} \leq \epsilon^{\frac{n+1}6}(1)\left(\frac{\epsilon(1)}{\epsilon(1-\delta_\alpha^{n+1})}\right)^{\frac{n+1}2} = (1+o(1))\epsilon^{\frac{n+1}6}(1) = o_N\left( (n+1)^{-N} \right)
\end{equation}
for \( 1-\delta_\alpha^{n+1}\leq x \leq 1 \). Since
\begin{equation}
\label{Yn1}
Y_{n+1}(x) = (2\alpha/\rho^2)(x+1)R(x) + o_N\left( (n+1)^{-N}\right)
\end{equation}
on any fixed small enough neighborhood of \( 1 \), it holds that
\begin{equation}
\label{Tn1}
V_{n+1}(x) = -\frac2\rho\frac{h(x)R(x)}{1+x} + o_N\left( (n+1)^{-N}\right) = \frac{4\alpha}\rho + o_N\left( (n+1)^{-N}\right)
\end{equation}
uniformly for \( 1-\delta_\alpha^{n+1}\leq x \leq 1 \). Let
\[
Z_{n+1}(x) := \sqrt{Y_{n+1}(x)} - \frac{x-1}2\left( (n+1)\frac{1-x}{x}\frac{\sqrt{Y_{n+1}(x)}}{r(x)} + \frac{Y_{n+1}^\prime(x)}{\sqrt{Y_{n+1}(x)}}\right).
\]
It follows from the definition of \( Z_{n+1}(x) \) and an estimate similar to \eqref{e1e1d} that
\begin{multline*}
\int_{1-\delta_\alpha^{n+1}}^1\frac{\epsilon^{(n+1)/2}(x)Z_{n+1}(x)\,\dd x}{(1-x)^2+\epsilon^{n+1}(x)Y_{n+1}(x)} = \left.\arctan\left(\frac{x-1}{\sqrt{\epsilon^{n+1}(x)Y_{n+1}(x)}}\right)\right|_{1-\delta_\alpha^{n+1}}^1 \\
= \frac\pi2 - \arctan\left( \mathcal O(1)\epsilon^{\frac{n+1}6}(1)\right) = \frac\pi2 + o_N\left( (n+1)^{-N} \right).
\end{multline*}
Furthermore, we get from \eqref{Yn1}, the definition of \( Z_{n+1}(x) \), and \eqref{Tn1} that
\[
Z_{n+1}(x) = \frac{4\alpha}\rho + o_N\left( (n+1)^{-N}\right) = V_{n+1}(x) + o_N\left( (n+1)^{-N}\right)
\]
uniformly for \( 1-\delta_\alpha^{n+1}\leq x \leq 1 \). Therefore, \eqref{dens} yields that
\begin{eqnarray*}
\frac2\pi\int_{1-\delta_\alpha^{n+1}}^1 \frac{\sqrt{1-h_{n+1}^2(x)}}{1-x^2} \dd x &=& \frac2\pi\int_{1-\delta_\alpha^{n+1}}^1\frac{\epsilon^{(n+1)/2}(x)\big(Z_{n+1}(x) + o_N\left( (n+1)^{-N}\right)\big)}{(1-x)^2+\epsilon^{n+1}(x)Y_{n+1}(x)} \dd x \\
& = & 1 + o_N\left( (n+1)^{-N} \right),
\end{eqnarray*}
where we used positivity of the integrand for the last estimate.
\end{proof}

\begin{proof}[Proof of Theorem~\ref{thm:1}]
The claim follows from formula \eqref{Enalpha} and Lemmas~\ref{lem:8}--\ref{lem:10}.
\end{proof}

\end{document}